\newtheorem{theorem}{Theorem}[section]
\newtheorem{lemma}[theorem]{Lemma}
\newtheorem{corollary}[theorem]{Corollary}
\theoremstyle{definition}
\newtheorem{definition}[theorem]{Definition}
\newtheorem{conjecture}[theorem]{Conjecture}
\title{Face Numbers of Shellable CW Balls and Spheres}
\author{Joshua Hinman}
\date{}
\begin{document}

\maketitle

\begin{abstract}
Let $\mathscr{X}$ be the boundary complex of a $(d+1)$-polytope, and let $\rho(d+1,k) = \frac{1}{2}[{\lceil (d+1)/2 \rceil \choose d-k} + {\lfloor (d+1)/2 \rfloor \choose d-k}]$. Recently, the author, answering B\'ar\'any's question from 1998, proved that for all $\lfloor \frac{d-1}{2} \rfloor \leq k \leq d$,
\[
    f_k(\mathscr{X}) \geq \rho(d+1,k)f_d(\mathscr{X}).
\]
We prove a generalization: if $\mathscr{X}$ is a shellable, strongly regular CW sphere or CW ball of dimension $d$, then for all $\lfloor \frac{d-1}{2} \rfloor \leq k \leq d$,
\[
    f_k(\mathscr{X}) \geq \rho(d+1,k)f_d(\mathscr{X}) + \frac{1}{2}f_k(\partial \mathscr{X}),
\]
with equality precisely when $k=d$ or when $k=d-1$ and $\mathscr{X}$ is simplicial. We further prove that if $\mathscr{S}$ is a strongly regular CW sphere of dimension $d$, and the face poset of $\mathscr{S}$ is both CL-shellable and dual CL-shellable, then $f_k(\mathscr{S}) \geq \min\{f_0(\mathscr{S}),f_d(\mathscr{S})\}$ for all $0 \leq k \leq d$.
\end{abstract}

\section{Introduction}

This paper is dedicated to the face numbers of shellable CW complexes. Our guiding question: if $\mathscr{X}$ is a shellable, strongly regular CW $d$-sphere or $d$-ball, and we are given $f_d(\mathscr{X})$, how small can its other face numbers be? Our work is motivated by similar questions about convex polytopes, so that is where we will begin our discussion.

B\'{a}r\'{a}ny asked the following in 1998 \cite{barany98}: if $P$ is a $d$-polytope and $0 \leq k \leq d-1$, can we guarantee that $f_k(P) \geq \min\{f_0(P),f_{d-1}(P)\}$? Despite its innocent appearance, B\'{a}r\'{a}ny's question remained open until 2023, when this author answered it in the affirmative \cite{hinman23}. The key result was a pair of linear inequalities:

\begin{theorem}[Hinman]
\label{polytopes}
    Let $P$ be a $(d+1)$-polytope and $0 \leq k \leq d$. Then
    \begin{align}
        f_k(P) &\geq \frac{1}{2}\biggl[{\lceil \frac{d+1}{2} \rceil \choose k} + {\lfloor \frac{d+1}{2} \rfloor \choose k}\biggr]f_0(P),\\
        f_k(P) &\geq \frac{1}{2}\biggl[{\lceil \frac{d+1}{2} \rceil \choose d-k} + {\lfloor \frac{d+1}{2} \rfloor \choose d-k}\biggr]f_d(P). \label{eqn_facets}
    \end{align}
\end{theorem}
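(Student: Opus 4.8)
The plan is to obtain Theorem~\ref{polytopes} as the sphere case of the stronger statement quoted in the abstract, which I would prove directly: \emph{for every shellable strongly regular CW $d$-sphere or $d$-ball $\mathscr{X}$ and every $\lfloor\frac{d-1}{2}\rfloor\le k\le d$,}
\[
    f_k(\mathscr{X})\;\ge\;\rho(d+1,k)\,f_d(\mathscr{X})+\tfrac12\,f_k(\partial\mathscr{X}).
\]
Since $\partial P$ is a shellable strongly regular CW $d$-sphere (Bruggesser--Mani) with $\partial(\partial P)=\varnothing$, and $f_j(P)=f_j(\partial P)$ for $j\le d$, applying this to $\mathscr{X}=\partial P$ yields inequality~\eqref{eqn_facets} in the stated range; for $k<\lfloor\frac{d-1}{2}\rfloor$ both binomial coefficients vanish, so $\rho(d+1,k)=0$ and \eqref{eqn_facets} is trivial there. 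The first inequality of Theorem~\ref{polytopes} then follows by applying \eqref{eqn_facets} to the polar dual $P^{\ast}$ and invoking $f_k(P)=f_{d-k}(P^{\ast})$, $f_0(P)=f_d(P^{\ast})$, and the identity $\rho(d+1,d-k)=\tfrac12\bigl[\binom{\lceil(d+1)/2\rceil}{k}+\binom{\lfloor(d+1)/2\rfloor}{k}\bigr]$. So it is enough to prove the displayed inequality for balls and spheres.

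I would prove it by induction on $n=f_d(\mathscr{X})$ via a shelling $F_1,\dots,F_n$. For $n=1$ we have $\mathscr{X}=F_1$, a single $d$-polytope with $\partial\mathscr{X}=\partial F_1$, so the claim reduces to $f_k(F_1)\ge 2\rho(d+1,k)$; this follows from the classical fact that the simplex minimizes face numbers, $f_k(F_1)\ge\binom{d+1}{k+1}=\binom{d+1}{d-k}$, together with the Vandermonde estimate $\binom{a+b}{m}\ge\binom{a}{m}+\binom{b}{m}$ applied with $a=\lceil\tfrac{d+1}{2}\rceil$, $b=\lfloor\tfrac{d+1}{2}\rfloor$, $m=d-k$. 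For $n\ge 2$, write $\mathscr{X}_{n-1}=F_1\cup\cdots\cup F_{n-1}$ (a shellable $d$-ball with $n-1$ facets) and $B=\overline{F_n}\cap\mathscr{X}_{n-1}$, which for a shelling is a shellable $(d-1)$-ball contained in $\partial F_n$ (it is all of $\partial F_n$ exactly when $\mathscr{X}$ is a sphere and $F_n$ is last). Let $B'=\overline{\partial F_n\setminus B}$, so that $B\cup B'=\partial F_n$ and $B\cap B'=\partial B=\partial B'$. Inclusion--exclusion for face counts of regular CW complexes gives $f_k(\mathscr{X})=f_k(\mathscr{X}_{n-1})+f_k(F_n)-f_k(B)$ and $f_k(\partial\mathscr{X})=f_k(\partial\mathscr{X}_{n-1})+f_k(B')-f_k(B)$. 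Substituting these and the inductive hypothesis into the target inequality, the $f_k(\partial\mathscr{X}_{n-1})$ and $f_k(B)$ terms cancel and the step reduces --- using $f_k(\partial F_n)=f_k(F_n)$ for $k\le d-1$ and $f_k(B)+f_k(B')=f_k(\partial F_n)+f_k(\partial B)$ --- to the single inequality
\[
    f_k(F_n)-f_k(\partial B)\;\ge\;2\,\rho(d+1,k)\qquad(\lfloor\tfrac{d-1}{2}\rfloor\le k\le d-1),
\]
the remaining case $k=d$ being the identity $f_d(F_n)=\rho(d+1,d)=1$.

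The crux is thus the last display: for a $d$-polytope $Q=F_n$ and a shellable $(d-1)$-ball $B\subseteq\partial Q$ coming from a shelling, the $k$-faces of $\partial Q$ lying off the $(d-2)$-sphere $\partial B$ number at least $2\rho(d+1,k)$. I would establish this by a separate inductive argument on $d$: shell $\partial Q$ compatibly with $B$ (run a shelling of $B$, then a reverse shelling of $B'$) and track, step by step, how many newly introduced $k$-faces avoid $\partial B$. The coefficient $\rho(d+1,k)$ enters because
\[
    \phi(j)\;:=\;\tfrac12\Bigl[\tbinom{\,j\,}{\,d-k\,}+\tbinom{\,d+1-j\,}{\,d-k\,}\Bigr]
\]
is a convex function of the integer $j$, symmetric about $\tfrac{d+1}{2}$, hence minimized at $j=\lfloor\tfrac{d+1}{2}\rfloor$ with minimum value exactly $\rho(d+1,k)$; bounding each step's contribution below by some $\phi(j)$ and summing produces the desired multiple of the facet count. (Tracing equality back through the chain forces $\phi$ to be constant along the shelling, which occurs only when $k=d$, or when $k=d-1$ and every facet is a simplex --- yielding the equality characterization in the abstract.)

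The main obstacle is precisely this secondary induction in the \emph{non-simplicial} regime. When $\partial Q$ is simplicial, the faces introduced at a shelling step form a single combinatorial interval $[R_i,F_i]$ around a restriction face $R_i$, the step's contribution is literally a binomial coefficient $\binom{\dim F_i-\dim R_i}{k-\dim R_i}$, and the convexity computation above is immediate; essentially this is how the simplicial case goes. But for genuinely non-simplicial $Q$ --- already for a $3$-cube, where attaching a square across a single edge introduces a set of new faces with two minimal elements --- the interval picture breaks down, so the step contribution is no longer read off as a binomial coefficient, and the $\tfrac12 f_k(\partial\,\cdot\,)$ correction term in the inductive invariant is exactly the device that makes the accounting close regardless of cell structure. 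Beyond forcing this through, the routine-but-essential tasks are verifying the shelling lemmas for strongly regular CW complexes (that $B$ is a shellable ball, that $\partial\mathscr{X}$ evolves by the stated inclusion--exclusion, that shellings of balls extend to shellings of spheres), importing the classical bound $f_k\ge\binom{d+1}{k+1}$ for polytopes, and carrying out the equality analysis.
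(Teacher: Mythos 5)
Your reduction of Theorem \ref{polytopes} to the CW generalization is sound and essentially mirrors the paper's architecture: the paper does not reprove Theorem \ref{polytopes} (it is quoted from \cite{hinman23}, where it was obtained from solid angles and codimension-one projections), but it does prove exactly the ball/sphere inequality you target (Theorem \ref{lower_bound}), and your telescoping induction on $n$ is an equivalent repackaging of the paper's simultaneous double count $\sum_{j}[f_k(\operatorname{int}\mathscr{C}_j)+f_k(\operatorname{int}\mathscr{D}_j)]\le 2f_k(\mathscr{X})-f_k(\partial\mathscr{X})$. Your crux inequality $f_k(F_n)-f_k(\partial B)\ge 2\rho(d+1,k)$ is, after rewriting the left-hand side as $f_k(\operatorname{int}B)+f_k(\operatorname{int}B')$, literally the paper's Lemma \ref{face_counting}.

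The genuine gap is that you do not prove this crux, and the route you sketch for it --- shell $\partial Q$ compatibly with $B$, bound each step's contribution by a convex function $\phi(j)$, and sum --- is precisely the one you yourself identify as breaking down for non-simplicial cells, where the faces introduced at a shelling step need not form an interval. The paper's mechanism is different and avoids stepwise accounting entirely. First (Lemma \ref{two_faces}, proved by induction on dimension using the recursive definition of shelling, Theorem \ref{intersection}, Lemma \ref{interior}, and thinness of the face lattice via Lemma \ref{atom}), one produces a \emph{single pair} of faces $C\in\operatorname{int}B$ and $D\in\operatorname{int}B'$ with $\dim C+\dim D\le d-1$. Then, since $\operatorname{int}B$ and $\operatorname{int}B'$ are upward closed, every $k$-face above $C$ lies in $\operatorname{int}B$ and every $k$-face above $D$ lies in $\operatorname{int}B'$; Xue's lemma on diamond lattices (Lemma \ref{boolean}) supplies at least $\binom{d-\dim C}{d-k}$ of the former and $\binom{d-\dim D}{d-k}$ of the latter, and a numerical inequality (Lemma 2.11 of \cite{hinman23}) converts $(d-\dim C)+(d-\dim D)\ge d+1$ into the bound $2\rho(d+1,k)$; this step is also where the restriction $\lfloor\frac{d-1}{2}\rfloor\le k$ enters. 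Two smaller corrections: in the CW setting the base cell $F_1$ is not a priori a polytope, so the bound $f_k(\partial F_1)\ge\binom{d+1}{d-k}$ must come from Lemma \ref{boolean} rather than from polytope theory; and the equality analysis does not run through constancy of $\phi$ along the shelling, but through forcing equality in Vandermonde (which yields $k=d-1$) and then $f_{d-1}(\partial F_j)=d+1$ for every $j$ (which forces each cell to be a simplex).
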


The proof of Theorem \ref{polytopes} relied on solid angles. Of particular importance was a result by Perles and Shephard \cite[(23)]{perles67}: if $Q$ is a $d$-polytope, $0 \leq k \leq d-1$, and $\varphi_k(Q)$ is the solid angle sum of $Q$ at its $k$-dimensional faces, then
\begin{gather*}
    2\varphi_k(Q) \geq f_k(Q) - \max_\pi f_k(\pi(Q)),
\end{gather*}
where $\pi$ is an orthogonal projection onto a codimension-one hyperplane in general position. In other words, $2\varphi_k(Q)$ is bounded below by the minimum number of $k$-faces lost in a codimension-one projection. These lost faces comprise the interiors of two subcomplexes of $\partial Q$: the ``upper" complex and ``lower" complex induced by $\pi$. These induced subcomplexes were vital in proving a solid angle inequality \cite[Proposition 3.1]{hinman23}: for all $d$-polytopes $Q$ and all $0 \leq k \leq d-1$,
\begin{gather}
2\varphi_k(Q) \geq {\lceil \frac{d+1}{2} \rceil \choose d-k} + {\lfloor \frac{d+1}{2} \rfloor \choose d-k}. \label{eqn_angles}
\end{gather}
Inequality (\ref{eqn_facets}) results from summing (\ref{eqn_angles}) over all facets of a $(d+1)$-polytope $P$.

As some results about face numbers of polytopes continue to hold for larger classes of CW complexes, it is natural to ask a generalized version of B\'{a}r\'{a}ny's question: if $\mathscr{X}$ is a strongly regular CW $d$-sphere and $0 \leq k \leq d$, can we guarantee that $f_k(\mathscr{X}) \geq \min\{f_0(\mathscr{X}),f_d(\mathscr{X})\}$? Furthermore, does $\mathscr{X}$ satisfy the inequalities of Theorem \ref{polytopes}? This paper will partially answer each of the preceding questions.

Our main result is a generalization of (\ref{eqn_facets}): if $\mathscr{X}$ is a shellable, strongly regular CW sphere or ball of dimension $d \geq 1$, then for all $\lfloor \frac{d-1}{2} \rfloor \leq k \leq d$,
\begin{gather}
    f_k(\mathscr{X}) \geq \frac{1}{2} \left[{\lceil \frac{d+1}{2} \rceil \choose d-k} + {\lfloor \frac{d+1}{2} \rfloor \choose d-k}\right]f_d(\mathscr{X}) + \frac{1}{2}f_k(\partial \mathscr{X}), \label{eqn_mainresult}
\end{gather}
with equality precisely when $k=d$ or when $k=d-1$ and $\mathscr{X}$ is simplicial. Inequality (\ref{eqn_mainresult}) implies the following: if $\mathscr{S}$ is a strongly regular CW $d$-sphere such that the face poset $L(\mathscr{S})$ is both CL-shellable and dual CL-shellable, then for all $0 \leq k \leq d$,
\begin{align}
    f_k(\mathscr{S}) &\geq \frac{1}{2} \left[{\lceil \frac{d+1}{2} \rceil \choose k} + {\lfloor \frac{d+1}{2} \rfloor \choose k}\right]f_0(\mathscr{S}),\\
    f_k(\mathscr{S}) &\geq \frac{1}{2} \left[{\lceil \frac{d+1}{2} \rceil \choose d-k} + {\lfloor \frac{d+1}{2} \rfloor \choose d-k}\right]f_d(\mathscr{S}).
\end{align}
Consequently, $f_k(\mathscr{S}) \geq \min\{f_0(\mathscr{S}),f_d(\mathscr{S})\}$.

Bj\"{o}rner and Wachs proved that a graded poset is CL-shellable if and only if it admits a recursive atom ordering \cite{bjorner83}. Thus, when we require $L(\mathscr{S})$ to be both CL-shellable and dual CL-shellable, it is equivalent to saying that $L(\mathscr{S})$ admits both a recursive atom ordering and a recursive coatom ordering.

We would like to prove (\ref{eqn_mainresult}) in the same way as Theorem \ref{polytopes}---but since CW spheres do not have solid angles or nice orthogonal projections, we will need to modify our approach. Let $\mathscr{S}$ be a shellable, strongly regular CW $(d-1)$-sphere and $(F_1, \ldots, F_n)$ a shelling of its $(d-1)$-faces. In place of an ``upper" and ``lower" subcomplex, we will consider a ``beginning" subcomplex $\mathscr{C}=\langle F_1, \ldots, F_j\rangle$ and an ``ending" subcomplex $\mathscr{D}=\langle F_{j+1}, \ldots, F_n \rangle$ for some $1 \leq j \leq n$. We will mimic the proof of Theorem \ref{polytopes}, but in place of (\ref{eqn_angles}), we will show that
\begin{gather}
    f_k(\operatorname{int}\mathscr{C}) + f_k(\operatorname{int}\mathscr{D}) \geq {\lceil \frac{d+1}{2} \rceil \choose d-k} + {\lfloor \frac{d+1}{2} \rfloor \choose d-k}. \label{eqn_cells}
\end{gather}
Ultimately, we will prove inequality (\ref{eqn_mainresult}) by summing (\ref{eqn_cells}) over all $d$-faces of a CW $d$-sphere or CW $d$-ball $\mathscr{X}$.

\section{Preliminaries}
In this section, we will introduce the basic notions of regular, strongly regular, and shellable CW complexes. We will then discuss face posets, the combinatorial tools for describing a CW complex's structure. We refer the reader to \cite{bjorner82A,bjorner82B,bjorner83} for any undefined terminology.

\subsection{CW complexes}
We begin by introducing the main objects of this paper: strongly regular CW complexes.

\begin{definition}
\label{cwcomplex}
    A \emph{CW complex} of dimension $d \geq 0$, or \emph{CW $d$-complex}, is a set $\mathscr{X}$ of closed balls of dimension at most $d$ (i.e. topological balls homeomorphic to $\mathds{B}^k$, $0 \leq k \leq d$), together with a topological space $|\mathscr{X}|$ and a set of attaching maps defined recursively as follows.
    \begin{itemize}
        \item If $d=0$, then $\mathscr{X}$ is a finite set of points, and $|\mathscr{X}|$ is the set $\mathscr{X}$ under the discrete topology.
        \item If $d>0$, we construct $\mathscr{X}$ from a CW $(d-1)$-complex $\mathscr{X}'$. We let $\mathscr{X} = \mathscr{X}' \cup \{F_1, \ldots, F_n\}$, where $n \geq 0$ and $F_1, \ldots, F_n$ are closed $d$-balls. We then build the topological space $|\mathscr{X}|$ by attaching $F_1, \ldots, F_n$ to $|\mathscr{X}'|$ with attaching maps $\partial F_i \mapsto |\mathscr{X}'|$, $i=1, \ldots, n$.    
    \end{itemize}
    We say that $\mathscr{X}$ is \emph{regular} if the attaching map for each ball in $\mathscr{X}$ is a homeomorphism on the boundary of the ball.
    
    If $\mathscr{X}$ is a CW $d$-complex, we call the elements of $\mathscr{X} \cup \{\varnothing\}$ the \emph{faces} of $\mathscr{X}$. For all $0 \leq k \leq d$, we define $f_k(\mathscr{X})$ as the number of $k$-dimensional faces of $\mathscr{X}$; similarly, if $S$ is a subset of $\mathscr{X}$, we define $f_k(S)$ as the number of $k$-dimensional faces belonging to $S$.

    For any face $G \in \mathscr{X}$, we define the \emph{geometric realization} $|G|$ of $G$ as the image of $G$ in $|\mathscr{X}|$ under the inclusion map. For a set of faces $S \subseteq \mathscr{X}$, we define $|S| = \bigcup_{G \in S}|G|$.
\end{definition}

For faces $G_1, G_2 \in \mathscr{X}$, we will often say ``$G_1$ is contained in $G_2$" or ``$G_2$ contains $G_1$" as shorthand to mean $|G_1| \subseteq |G_2|$.

\begin{definition}
    Let $\mathscr{X}$ be a regular CW complex. We say that $\mathscr{X}$ is \emph{strongly regular} if for any two faces $G_1, G_2 \in \mathscr{X}$, $|G_1| \cap |G_2|$ is the geometric realization of a (possibly empty) face of $\mathscr{X}$. 
\end{definition}

We will primarily deal with CW complexes that realize spheres and balls. Introducing such objects will require a few more definitions.

\begin{definition}
    Let $\mathscr{X}$ be a strongly regular CW $d$-complex. We call $\mathscr{X}$ \emph{pure} if each face of $\mathscr{X}$ is contained in a face of dimension $d$. If, in addition, each $(d-1)$-face is contained in at most two $d$-faces, we call $\mathscr{X}$ a \emph{pseudomanifold}.
\end{definition}

\begin{definition}
    A \emph{CW $d$-sphere} (resp. \emph{$d$-ball}) is a strongly regular CW $d$-complex $\mathscr{X}$ such that $|\mathscr{X}| \cong \mathds{S}^d$ (resp. $\mathds{B}^d$).
\end{definition}
Note that any CW sphere or ball is a pseudomanifold.

The following two definitions deal with certain subsets of CW complexes.
\begin{definition}
    Let $\mathscr{X}$ be a CW complex and $S \subseteq \mathscr{X}$ a set of faces. The \emph{closure} of $S$, denoted $\langle S \rangle$, is the CW complex consisting of all faces of $\mathscr{X}$ which are contained in some member of $S$.
\end{definition}

\begin{definition}
    Let $\mathscr{X}$ be a $d$-dimensional pseudomanifold. The \emph{boundary complex} $\partial \mathscr{X}$ is the closure of the set of $(d-1)$-faces of $\mathscr{X}$ contained in exactly one $d$-face. The \emph{interior} $\operatorname{int}\mathscr{X}$ is the set of faces of $\mathscr{X}$ not contained in $\partial \mathscr{X}$.
\end{definition}
Note that $\operatorname{int}\mathscr{X}$ is not, in general, a CW complex.

If $G$ is a face of some strongly regular CW complex, we will often write $\partial G$ as shorthand for $\partial \langle G \rangle$. Henceforth, this is what $\partial G$ will mean unless stated otherwise.

Shellable CW complexes will play a central role in this paper. Our definition of shelling is taken from \cite{bjorner84}.
\begin{definition}
    Let $\mathscr{X}$ be a pure, regular CW $d$-complex. An ordering $(F_1,\ldots,F_n)$ of the $d$-faces of $\mathscr{X}$ is a \emph{shelling} if either $d=0$, or $d>0$ and
    \begin{itemize}
        \item $\partial F_j$ has a shelling for all $1 \leq j \leq n$.
        \item For all $1 < j \leq n$, $\partial F_j \cap \bigcup_{i=1}^{j-1} \partial F_i$ is a pure $(d-1)$-complex, and there exists a shelling of $\partial F_j$ beginning with the $(d-1)$-faces of $\partial F_j \cap \bigcup_{i=1}^{j-1} \partial F_i$.
    \end{itemize}
    A pure, regular CW complex is \emph{shellable} if it admits a shelling.
\end{definition}

\subsection{Face posets and lattices}
When working with regular CW complexes, it is often useful to consider the face poset. The face poset describes a CW complex's combinatorial structure; that is, which faces are contained in which. This section will introduce some basic properties of these posets.

Of particular interest are diamond lattices, a special type of lattices which include the face posets of CW spheres.

\begin{definition}
    Let $L$ be a finite, graded poset. We say $L$ is a \emph{lattice} if any two elements of $L$ have a unique least upper bound and a unique greatest lower bound. We say $L$ is a \emph{diamond lattice} if, in addition, each interval of $L$ with length two has size exactly four.
\end{definition}
Some sources \cite{bjorner84} also refer to a diamond lattice as a ``thin" lattice.

The following is a direct consequence of Proposition 3.1 in \cite{xue23}.
\begin{lemma}
\label{atom}
    Let $(L,\leq)$ be a diamond lattice of rank at least two. For every coatom $F$ of $L$, there exists an atom $v$ of $L$ such that $v \not \leq F$.
\end{lemma}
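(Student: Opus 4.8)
The plan is to prove the lemma by induction on the rank $n$ of $L$. For the base case $n=2$, the interval $[\hat{0},\hat{1}]$ is all of $L$ and has length two, so by the diamond condition it contains exactly four elements: $\hat{0}$, $\hat{1}$, and two elements of rank one. These two rank-one elements are precisely the atoms of $L$ (and also its coatoms). The coatom $F$ is one of them; taking $v$ to be the other, we have $v \neq F$ and $\operatorname{rank}(v)=\operatorname{rank}(F)=1$, so $v \not\leq F$, as desired.

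For the inductive step, assume $n \geq 3$ and that the statement holds for all diamond lattices of rank at least two but smaller than $n$. Since $F$ is a coatom, pick $G \lessdot F$; then $\operatorname{rank}(G)=n-2 \geq 1$. The interval $[G,\hat{1}]$ has length two, so by the diamond condition it has exactly four elements, i.e.\ exactly two elements strictly between $G$ and $\hat{1}$. By gradedness both have rank $n-1$ and hence are coatoms of $L$; one of them is $F$, and we call the other $H$, so $G \lessdot H \lessdot \hat{1}$ with $H \neq F$. I claim $F \wedge H = G$: indeed $G \leq F$ and $G \leq H$ give $G \leq F \wedge H$, while $F\wedge H \leq F$ together with $F \wedge H \neq F$ (otherwise $F \leq H$, forcing $F=H$ since both are coatoms) and $G \lessdot F$ force $F \wedge H = G$.

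Now consider the interval $[\hat{0},H]$. Being an interval of $L$, it is a lattice (meets and joins of its elements, taken in $L$, stay inside it); it is graded; and any length-two subinterval of it is a length-two interval of $L$ and so has four elements, so $[\hat{0},H]$ is again a diamond lattice, now of rank $n-1 \geq 2$, and $G$ is one of its coatoms. By the inductive hypothesis there is an atom $v$ of $[\hat{0},H]$ with $v \not\leq G$; since $v$ covers $\hat{0}$ in $[\hat{0},H]$ it covers $\hat{0}$ in $L$, so $v$ is an atom of $L$. Finally, if $v \leq F$, then combined with $v \leq H$ this would give $v \leq F \wedge H = G$, contradicting the choice of $v$. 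Hence $v \not\leq F$, completing the induction.

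The only point needing a little care is checking that $[\hat{0},H]$ inherits the diamond-lattice structure, but this is routine. The substantive idea is that one must apply the inductive hypothesis to the interval below the \emph{other} coatom $H$, not below $F$: every atom of $[\hat{0},F]$ lies below $F$, so that interval is useless for producing an atom outside $F$, whereas the identity $F \wedge H = G$ converts ``not below $G$ inside $[\hat{0},H]$'' into ``not below $F$ in $L$.'' An alternative route that avoids inducting on the lemma directly is to establish the auxiliary fact that $\bigvee(\text{atoms of }L)=\hat{1}$ in any diamond lattice (by induction on rank: each coatom is the join of the atoms beneath it, by the same fact applied to the interval below it, and $\bigvee(\text{coatoms})=\hat{1}$ because a diamond lattice of rank $\geq 2$ has at least two coatoms); the lemma is then immediate since $F<\hat{1}$.
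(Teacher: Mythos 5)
Your proof is correct. Note, though, that the paper does not actually prove this lemma from scratch: it cites it as a direct consequence of Xue's Proposition 3.1 in \cite{xue23}, so your self-contained inductive argument is a genuinely different (and more elementary) route. The induction is sound: the base case $n=2$ is immediate from the four-element condition on $[\hat 0,\hat 1]$; in the inductive step, the diamond condition on $[G,\hat 1]$ produces the second coatom $H$, the covering relation $G \lessdot F$ forces $F \wedge H = G$, and the interval $[\hat 0,H]$ inherits gradedness, the lattice property, and the length-two condition, so the inductive hypothesis applies. The one step worth being explicit about --- that an atom of $[\hat 0,H]$ is an atom of $L$ and that $v \le F$ would force $v \le F\wedge H = G$ --- you handle correctly. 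What your approach buys is independence from the external reference; what the paper's citation buys is brevity and consistency with its use of Xue's Proposition 3.2 (Lemma 2.9) elsewhere. Your closing sketch of the alternative via $\bigvee(\text{atoms}) = \hat 1$ is also essentially right but is stated too quickly to stand on its own; the main inductive proof is the one that is complete.
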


\begin{lemma}[Xue {\cite[Proposition 3.2]{xue23}}]
\label{boolean}
    Let $L = [\hat 0,\hat 1]$ be a diamond lattice of rank $d+1$ and $G$ an element of rank $r$. Then for all $r \leq k+1 \leq d$, the upper interval $[G, \hat 1]$ has at least ${d-r+1 \choose d-k}$ elements of rank $k+1$.
\end{lemma}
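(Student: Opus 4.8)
\emph{Proof proposal.}
The plan is to reduce the statement to the special case $G=\hat 0$ and then prove that case by induction on the rank. Every interval of a diamond lattice is again a diamond lattice: an interval of a graded lattice is a graded lattice, and a length-two interval of $[G,\hat 1]$ is also a length-two interval of $L$, so it has exactly four elements. Hence $[G,\hat 1]$ is a diamond lattice of rank $d+1-r$, and an element of rank $k+1$ in $L$ that lies in $[G,\hat 1]$ has rank $(k+1)-r$ within $[G,\hat 1]$. Since $\binom{d-r+1}{d-k}=\binom{d-r+1}{(d-r+1)-(d-k)}=\binom{d-r+1}{(k+1)-r}$, it suffices to prove the following absolute statement: \emph{if $M$ is a diamond lattice of rank $m$, then for every $0\le j\le m$ it has at least $\binom{m}{j}$ elements of rank $j$.} The Boolean lattice of rank $m$ attains equality, and the induction below imitates splitting off one coordinate.

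For the induction I would argue as follows. The cases $m\le 1$ are immediate, and for any $m$ the ranks $j=0$ and $j=m$ are trivial, since $M$ has a unique element of each of those ranks. So suppose $m\ge 2$ and $1\le j\le m-1$, and assume the claim for all diamond lattices of rank smaller than $m$. Choose any coatom $F$ of $M$; because $M$ has rank at least two, Lemma~\ref{atom} supplies an atom $v$ with $v\not\le F$. Among the rank-$j$ elements of $M$, let $A$ be those that lie below $F$ and $B$ those that lie above $v$. Then $A$ and $B$ are disjoint, since $v\le x\le F$ would force $v\le F$. Now $A$ is precisely the set of rank-$j$ elements of the interval $[\hat 0,F]$, which is a diamond lattice of rank $m-1$, so $|A|\ge\binom{m-1}{j}$ by the inductive hypothesis; and $B$ is precisely the set of rank-$(j-1)$ elements of the interval $[v,\hat 1]$ (rank measured within that interval), again a diamond lattice of rank $m-1$, so $|B|\ge\binom{m-1}{j-1}$. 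Therefore $M$ has at least $|A|+|B|\ge\binom{m-1}{j}+\binom{m-1}{j-1}=\binom{m}{j}$ elements of rank $j$, by Pascal's identity; this closes the induction and, after unwinding the reduction, proves the lemma.

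I expect the only substantive ingredient to be Lemma~\ref{atom}: producing an atom $v$ that avoids a prescribed coatom $F$ is exactly what forces the ``$\le F$'' and ``$\ge v$'' parts of the count to be disjoint while their two lower bounds combine into a single Pascal step. Everything else should be routine: checking that the identity on elements realizes the claimed rank-shifting bijection between $B$ and the rank-$(j-1)$ level of $[v,\hat 1]$, confirming that $[\hat 0,F]$ and $[v,\hat 1]$ both have rank exactly $m-1$ so that the inductive hypothesis applies in the right rank ranges, and dispatching the base cases $m=0,1$ and the extreme ranks $j=0,m$ by hand.
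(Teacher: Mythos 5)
Your proof is correct: the reduction of $[G,\hat 1]$ to a diamond lattice of rank $d-r+1$, the base cases, and the inductive step splitting the rank-$j$ elements into those below a coatom $F$ and those above an atom $v\not\le F$ (disjoint by Lemma~\ref{atom}, combined by Pascal's identity) all check out. The paper itself gives no proof of this lemma---it is imported verbatim from Xue---and your argument is essentially the standard one from that source, built on the same Lemma~\ref{atom}.
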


\begin{definition}
    Let $\mathscr{X}$ be a CW complex. The \emph{face poset} $L(\mathscr{X})$ is the set of faces of $\mathscr{X}$ ordered by containment, with a unique minimal element $\hat{0}$ representing the empty face, and with a unique maximal element $\hat{1}$ added.
\end{definition}
Note that for all pseudomanifolds $\mathscr{X}$, including all CW spheres, $L(\mathscr{X})$ is a diamond lattice.

The next three theorems, due to Bj\"{o}rner, show that all shelling information about a regular CW complex is encoded in its face poset. These theorems involve a \emph{CL-shellability}, a version of shellability for lattices. The precise definition of a CL-shellable lattice will never be used and is hence omitted, but can be found in \cite{bjorner82A}.

A lattice is \emph{dual CL-shellable} if its dual lattice is CL-shellable.

\begin{theorem}[Bj\"{o}rner {\cite[Proposition 4.2]{bjorner84}}]
\label{poset}
    Let $\mathscr{X}$ be a pure, regular CW complex. Then $\mathscr{X}$ is shellable if and only if $L(\mathscr{X})$ is dual CL-shellable.
\end{theorem}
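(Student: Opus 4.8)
The plan is to reduce the statement to a purely combinatorial comparison of two recursively defined notions and then induct on dimension. Recall (Bj\"orner--Wachs \cite{bjorner83}) that a bounded graded poset is CL-shellable if and only if it admits a recursive atom ordering; dualizing, $L(\mathscr{X})$ is dual CL-shellable if and only if it admits a \emph{recursive coatom ordering}. Since $\mathscr{X}$ is pure, the coatoms of $L(\mathscr{X})$ are exactly the $d$-faces of $\mathscr{X}$, so it suffices to prove that an ordering $(F_1,\dots,F_n)$ of the $d$-faces is a shelling of $\mathscr{X}$ if and only if it is a recursive coatom ordering of $L(\mathscr{X})$. The proof goes by induction on $d = \dim\mathscr{X}$, with the cases $d \le 1$ immediate from the definitions.

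Before the inductive step I would record the dictionary between face-poset data and CW data, all of which follows from standard properties of regular CW complexes. For faces $G,H$ of $\mathscr{X}$ one has $|G| \subseteq |H|$ if and only if $G \le H$ in $L(\mathscr{X})$; consequently $|A| \cap |B| = |A \cap B|$ for subcomplexes $A,B$, so $\partial F_j \cap \bigcup_{i<j}\partial F_i$ is a genuine subcomplex whose face poset is $\bigl(\bigcup_{i<j}[\hat 0, F_i]\bigr)\cap [\hat 0, F_j]$. Moreover, for each $d$-face $F$ the interval $[\hat 0, F]$ in $L(\mathscr{X})$ is isomorphic to $L(\partial F)$, with $F$ corresponding to the added maximal element of $L(\partial F)$; under this isomorphism the coatoms of $[\hat 0, F]$ are exactly the $(d-1)$-faces of $\partial F$, and a $(d-1)$-face $G$ of $\partial F_j$ lies in $[\hat 0, F_i]$ if and only if $G \in \partial F_i$. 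Hence the coatoms of $[\hat 0, F_j]$ lying in $\bigcup_{i<j}[\hat 0, F_i]$ are precisely the $(d-1)$-faces of the subcomplex $\partial F_j \cap \bigcup_{i<j}\partial F_i$.

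With this dictionary the two definitions match clause by clause. The existence of a recursive coatom ordering of $[\hat 0, F_j]$ is equivalent, by the inductive hypothesis applied to the $(d-1)$-complex $\partial F_j$ together with Bj\"orner--Wachs, to the shellability of $\partial F_j$; refining this, the existence of a recursive coatom ordering of $[\hat 0, F_j]$ in which the coatoms lying in $\bigcup_{i<j}[\hat 0, F_i]$ come first corresponds exactly to a shelling of $\partial F_j$ beginning with the $(d-1)$-faces of $\partial F_j \cap \bigcup_{i<j}\partial F_i$, which is the second bullet of the shelling definition. Finally, condition (ii) in the definition of a recursive coatom ordering---for $i<j$, every common lower bound of $F_i$ and $F_j$ lies below some coatom $y$ of $[\hat 0, F_j]$ with $y \le F_k$ for some $k<j$---translates, via the dictionary, into the assertion that every face of $\partial F_j \cap \bigcup_{i<j}\partial F_i$ is contained in a $(d-1)$-face of that subcomplex, i.e.\ that $\partial F_j \cap \bigcup_{i<j}\partial F_i$ is a pure $(d-1)$-complex, which is the first bullet. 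Chaining these equivalences yields the theorem.

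The step I expect to be the main obstacle is making the refined inductive hypothesis airtight: one must induct not merely on ``$\partial F$ shellable $\iff L(\partial F)$ dual CL-shellable'' but on the version that keeps track of which facets of $\partial F$ (equivalently, which coatoms of $L(\partial F)$) form a prescribed initial block of the shelling, so that the ``comes first'' clause of the recursive coatom ordering matches the ``beginning with'' clause of the shelling definition. This means strengthening the statement being proved by induction and checking that the Bj\"orner--Wachs correspondence between recursive coatom orderings of a poset and CL-shellings of its dual respects a prescribed initial block of coatoms. The remaining points are routine but must be verified for regular---not necessarily strongly regular---CW complexes: the equivalence $|G|\subseteq|H|\iff G\le H$, the identity $|A|\cap|B| = |A\cap B|$ for subcomplexes, and the descriptions of the face posets of $\langle F\rangle$ and $\partial F$; one should also dispatch the degenerate case where $\partial F_j\cap\bigcup_{i<j}\partial F_i$ is empty, which under both definitions can occur only for $j=1$, the convention being that for $j\ge 2$ this initial block of coatoms is required to be nonempty.
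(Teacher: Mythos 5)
The paper cites this result from Bj\"orner without proof, and your argument is essentially Bj\"orner's original one: his definition of shelling is designed precisely so that, under the dictionary between faces of $\mathscr{X}$ and elements of $L(\mathscr{X})$, the two bullets of the shelling definition match clauses (i) and (ii) of a recursive coatom ordering, with the induction carried at the level of a fixed ordering of facets (which, as you note, is exactly the strengthening that makes the ``beginning with'' and ``comes first'' clauses correspond). Your proposal is correct.
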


\begin{theorem}[Bj\"{o}rner {\cite[Proposition 4.3]{bjorner84}}]
\label{intersection}
    Let $\mathscr{X}$ be a pure, strongly regular CW $d$-complex with a shelling $(F_1, \ldots, F_n)$. For all $1 < j \leq n$, $\partial F_j \cap \bigcup_{i=1}^{j-1} \partial F_i$ is a PL-ball or PL-sphere of dimension $d-1$. Furthermore, $\mathscr{X}$ is a CW ball if and only if each $\partial F_j \cap \bigcup_{i=1}^{j-1} \partial F_i$ is a PL-ball.
\end{theorem}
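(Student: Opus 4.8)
The plan is to prove the statement by induction on $d$, carrying along the auxiliary claim that every shellable, strongly regular CW $d$-ball (resp. $d$-sphere) is a PL $d$-ball (resp. PL $d$-sphere), and that if $(G_1,\ldots,G_{n'})$ shells such a complex then each partial union $\langle G_1,\ldots,G_m\rangle$ is a PL $d$-ball for $m<n'$ and is the whole complex for $m=n'$. The case $d=1$ is immediate: a shellable, strongly regular CW $1$-complex that is a ball or sphere is a PL arc or circle, and its partial unions are arcs. Throughout I will use that the complexes involved --- $\mathscr{X}$, the cells $\langle F_j\rangle$, the boundaries $\partial F_j$, and the partial unions $\mathscr{X}_j := \langle F_1,\ldots,F_j\rangle$ --- are pseudomanifolds (non-branching); this is the setting in which Theorem~\ref{intersection} is applied, and as noted above every CW ball or sphere is a pseudomanifold.

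For the first assertion, fix $1<j\le n$ and set $\mathscr{R}_j := \partial F_j \cap \bigcup_{i=1}^{j-1}\partial F_i$. By the definition of shelling, $\partial F_j$ is a shellable, strongly regular CW $(d-1)$-sphere, so by the inductive hypothesis it is a PL $(d-1)$-sphere, and there is a shelling of $\partial F_j$ whose first $m$ facets are precisely the $(d-1)$-faces of $\mathscr{R}_j$. Hence $\mathscr{R}_j$ is the $m$-th partial union of a shelling of the PL $(d-1)$-sphere $\partial F_j$; by the inductive hypothesis it is a PL $(d-1)$-ball when $m$ is less than the number of $(d-1)$-faces of $\partial F_j$, and it equals $\partial F_j$ --- a PL $(d-1)$-sphere --- otherwise. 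This gives the first conclusion.

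Next I would run an inner induction on $j$ to pin down the topological type of $\mathscr{X}_j$. First, $\langle F_j\rangle$ is a PL $d$-ball: its face poset is that of $\partial F_j$ with a top adjoined, so $|\langle F_j\rangle|$ is a cone on the PL $(d-1)$-sphere $|\partial F_j|$, hence a PL $d$-ball. Second, $\mathscr{R}_j$ is contained in the boundary complex of $\mathscr{X}_{j-1}$: a $(d-1)$-face of $\mathscr{R}_j$ lies in $F_j$ and in some $F_i$ with $i<j$, and it cannot lie in a second $F_{i'}$ with $i'<j$ since $\mathscr{X}$ is non-branching, so it lies in exactly one facet of $\mathscr{X}_{j-1}$; and $\mathscr{R}_j$ is visibly a subcomplex of $\partial\langle F_j\rangle$. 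Assuming $\mathscr{X}_{j-1}$ is a PL $d$-ball (the base case $\mathscr{X}_1=\langle F_1\rangle$ having just been treated), we have $\mathscr{X}_j = \mathscr{X}_{j-1}\cup\langle F_j\rangle$ with $\mathscr{X}_{j-1}\cap\langle F_j\rangle = \mathscr{R}_j$, and two cases arise. If $\mathscr{R}_j$ is a PL $(d-1)$-ball, then $\mathscr{X}_j$ is a union of two PL $d$-balls along a PL $(d-1)$-ball lying in the boundary of each, hence a PL $d$-ball. If $\mathscr{R}_j$ is a PL $(d-1)$-sphere, then, being a subcomplex of the $(d-1)$-sphere $\partial\langle F_j\rangle$ of the same dimension, it equals $\partial\langle F_j\rangle$; and since $\partial\mathscr{X}_{j-1}$ is a $(d-1)$-sphere containing $\mathscr{R}_j$, it too equals $\mathscr{R}_j = \partial\langle F_j\rangle$, so $\mathscr{X}_j$ is a union of two PL $d$-balls along their entire boundary sphere, hence a PL $d$-sphere --- and then $\mathscr{X}_j$, a closed $d$-manifold sitting inside the connected pseudomanifold $\mathscr{X}$ all of whose ridges it already saturates, must equal $\mathscr{X}$, forcing $j=n$.

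Combining these: if every $\mathscr{R}_j$ is a PL-ball then $\mathscr{X}_j$ stays a PL $d$-ball for all $j$, so $\mathscr{X}=\mathscr{X}_n$ is a CW ball; if some $\mathscr{R}_j$ is a PL-sphere, the second case forces $j=n$ and makes $\mathscr{X}=\mathscr{X}_n$ a $d$-sphere, so $\mathscr{X}$ is not a CW ball. This is the ``furthermore'' clause, and it simultaneously completes the auxiliary induction. The main obstacle I anticipate is not the combinatorial bookkeeping but cleanly invoking the PL-topology inputs --- that coning a PL $(d-1)$-sphere yields a PL $d$-ball, that a union of two PL $d$-balls along a PL $(d-1)$-ball in their boundaries is a PL $d$-ball, and that gluing two PL $d$-balls along their whole boundary sphere gives a PL $d$-sphere --- which I would quote from a standard PL topology reference rather than reprove, and taking care that the induction on $d$ is arranged so the ``initial segment of a shelling'' refinement is available at dimension $d-1$ exactly when it is used.
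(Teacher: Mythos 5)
The paper does not prove this statement: it is quoted verbatim (essentially) from Bj\"orner, and the only ``proof'' in the source is the citation. So there is nothing internal to compare against; what you have written is, in substance, Bj\"orner's own argument, i.e.\ the regular-CW version of the Danaraj--Klee induction showing that a shellable pseudomanifold is a ball or a sphere. Your induction is arranged correctly --- the auxiliary claim in dimension $d-1$ identifies $\mathscr{R}_j$ as a partial union of a shelling of the PL $(d-1)$-sphere $\partial F_j$, which then drives the inner induction on $j$ in dimension $d$ --- and the three PL facts you defer to the literature (cone on a PL sphere is a PL ball; ball $\cup$ ball along a common boundary ball is a ball; ball $\cup$ ball along the full boundary is a sphere) are exactly the standard inputs one should quote rather than reprove. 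Two points deserve to be made explicit in a final write-up. First, when you say ``$\partial\mathscr{X}_{j-1}$ is a $(d-1)$-sphere containing $\mathscr{R}_j$,'' you are identifying the combinatorial boundary (the closure of the ridges lying in exactly one facet) with the PL boundary of the manifold $|\mathscr{X}_{j-1}|$; this is true and standard for pseudomanifolds realizing manifolds-with-boundary, but it is a step, not a tautology. Second, your decision to add the pseudomanifold hypothesis is not a convenience but a necessity: as literally stated, the ``furthermore'' clause is false without it (three $2$-cells sharing a single common edge form a pure, strongly regular, shellable complex in which every $\mathscr{R}_j$ is a PL $1$-ball, yet the complex is not a ball). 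Bj\"orner's original proposition carries the thinness hypothesis, and the theorem is only ever applied in this paper to CW balls and spheres, so your reading is the correct one --- but the discrepancy with the quoted statement is worth a remark.
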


\begin{theorem}[Bj\"{o}rner {\cite[Proposition 4.5]{bjorner84}}]
\label{diamond}
    Let $L$ be a finite, graded poset of rank $d+2$ with a unique least element $\hat{0}$ and greatest element $\hat{1}$. The following are equivalent:
    \begin{itemize}
        \item $L$ is a dual CL-shellable diamond lattice.
        \item $L \cong L(\mathscr{X})$, $\mathscr{X}$ a shellable CW $d$-complex.
        \item $L \cong L(\mathscr{X})$, $\mathscr{X}$ a shellable CW $d$-sphere.
    \end{itemize}
\end{theorem}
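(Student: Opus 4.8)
This is Bj\"orner's theorem (\cite[Proposition~4.5]{bjorner84}), and I would reprove it by induction on $d$, running the cycle of implications (third)\,$\Rightarrow$\,(second)\,$\Rightarrow$\,(first)\,$\Rightarrow$\,(third), where the ordinals refer to the three displayed conditions in order; the base case $d=0$ is just the remark that a rank-$2$ diamond lattice has exactly two atoms and so is the face poset of the two-point CW $0$-sphere. The implication (third)\,$\Rightarrow$\,(second) is immediate, a CW $d$-sphere being in particular a CW $d$-complex. For (second)\,$\Rightarrow$\,(first), Theorem~\ref{poset} gives at once that $L$ is dual CL-shellable, while the fact that $L$ is moreover a diamond lattice comes out of Bj\"orner's structural analysis of shellings (\cite{bjorner84}): running a shelling $(F_1,\ldots,F_n)$ of $\mathscr{X}$ and invoking Theorem~\ref{intersection}, one finds that the closed complexes $\langle F_1,\ldots,F_j\rangle$ are PL balls until the final cell caps the complex into a PL sphere, and this pins down the combinatorics of $L(\mathscr{X})$. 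So the real content is the implication (first)\,$\Rightarrow$\,(third), which splits into a realization step and a topology step.

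\emph{Realization step.} Here I would quote Bj\"orner's characterization of the posets that arise as face posets of regular CW complexes: a finite poset $P$ with a least element $\hat 0$ is such a face poset if and only if $P \neq \{\hat 0\}$ and, for every $x > \hat 0$, the order complex of the open interval $(\hat 0, x)$ is homeomorphic to a sphere of dimension $\operatorname{rk}(x)-2$. I apply this to $P = L \setminus \{\hat 1\}$. Each lower interval $[\hat 0,x]$ of $L$ is again a diamond lattice, and it is dual CL-shellable since a CL-labeling of the dual restricts to every interval; so by the inductive hypothesis (applied at rank $\operatorname{rk}(x) < d+2$) we get $[\hat 0,x] \cong L(\mathscr{X}_x)$ for some shellable CW $(\operatorname{rk}(x)-2)$-sphere $\mathscr{X}_x$, and the order complex of $(\hat 0,x)$ is exactly the barycentric subdivision of $\mathscr{X}_x$, hence homeomorphic to $|\mathscr{X}_x| \cong \mathds{S}^{\operatorname{rk}(x)-2}$. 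Thus $L \cong L(\mathscr{X})$ for a regular CW $d$-complex $\mathscr{X}$; since $L$ is a lattice, the intersection of two closed cells is again a closed cell (it is the union of the open cells indexed by the common lower bounds, which form the principal ideal below the meet), so $\mathscr{X}$ is strongly regular; and since $L$ is dual CL-shellable, $\mathscr{X}$ is shellable by Theorem~\ref{poset}.

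\emph{Topology step.} It remains to see that $\mathscr{X}$ is a sphere, not a ball. Because $L$ is a diamond lattice, for any $(d-1)$-face $G$ the interval $[G,\hat 1]$ has length two and hence size four, so $G$ lies in exactly two $d$-faces and $\partial\mathscr{X} = \varnothing$. Fix a shelling $(F_1,\ldots,F_n)$ of $\mathscr{X}$. By Theorem~\ref{intersection}, each $\partial F_j \cap \bigcup_{i<j}\partial F_i$ is a PL ball or a PL sphere, this intersection is contained in the boundary of $\langle F_1,\ldots,F_{j-1}\rangle$, and $\langle F_1,\ldots,F_j\rangle$ is a PL $d$-ball so long as these intersections are PL balls. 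Since $\partial\mathscr{X}=\varnothing$, the complex $\mathscr{X}$ is not a ball, so by Theorem~\ref{intersection} some intersection is a PL sphere; a $(d-1)$-dimensional PL sphere sitting inside the $(d-1)$-sphere $\partial F_j$ must be all of $\partial F_j$, and, being contained in the $(d-1)$-sphere $\partial\langle F_1,\ldots,F_{j-1}\rangle$, it is also all of that sphere. So attaching $F_j$ glues the $d$-ball $\langle F_j\rangle$ onto the $d$-ball $\langle F_1,\ldots,F_{j-1}\rangle$ along their entire common boundary $(d-1)$-sphere, producing a PL $d$-sphere; this leaves no boundary to which a further $d$-cell could be regularly attached, so $j = n$ and $|\mathscr{X}| \cong \mathds{S}^d$. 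Hence $\mathscr{X}$ is a CW $d$-sphere, completing the induction.

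The main obstacle is the realization step --- producing, from the bare combinatorial data of $L$, an actual topological space with compatible attaching maps. Bj\"orner constructs $\mathscr{X}$ one cell at a time, and the delicate point is that the boundary $(d-1)$-spheres handed to the distinct $d$-cells by the inductive hypothesis must be glued together consistently along their shared subcomplexes. Making this rigorous calls for genuine PL-topology: collar neighborhoods, Newman's theorem that the closure of the complement of an open PL ball in $\mathds{S}^d$ is a PL ball, and the fact that a PL ball attached to a PL ball along a face of its boundary is again a PL ball --- which is exactly where shellability and the ball/sphere dichotomy of Theorem~\ref{intersection} do real work. Everything else (diamond-lattice arithmetic, barycentric subdivisions, the restriction of CL-labelings to intervals) is bookkeeping.
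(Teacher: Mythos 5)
First, a point of reference: the paper does not prove this statement at all --- it is quoted (in slightly paraphrased form) from Bj\"orner \cite{bjorner84}, so there is no in-paper argument to compare yours against. Judged on its own, your outline of the hard implication (first)\,$\Rightarrow$\,(third) is essentially Bj\"orner's: realize $L\setminus\{\hat 1\}$ as a regular CW complex via the ``CW poset'' criterion, feed the lower intervals $[\hat 0,x]$ to the inductive hypothesis to verify that each open interval $(\hat 0,x)$ triangulates a sphere, extract strong regularity from the lattice property and shellability from Theorem~\ref{poset}, and then use the ball/sphere dichotomy of Theorem~\ref{intersection} to see that the shelling must close up into a sphere. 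As a sketch of a cited foundational result, that part is reasonable.

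The genuine gap is in (second)\,$\Rightarrow$\,(first). You assert that thinness of $L(\mathscr{X})$ ``comes out of Bj\"orner's structural analysis of shellings,'' because the shelling builds PL balls ``until the final cell caps the complex into a PL sphere.'' This is false: a shellable, strongly regular CW $d$-ball (for $d=1$, two edges sharing a vertex) is a shellable CW $d$-complex whose face poset is graded of rank $d+2$ with $\hat 0$ and $\hat 1$, yet that poset is not a diamond lattice --- a boundary $(d-1)$-cell $G$ lies in exactly one $d$-cell, so the length-two interval $[G,\hat 1]$ has only three elements. The shelling of a ball never ``caps off,'' and no argument can deduce thinness from shellability alone. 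In Bj\"orner's actual Proposition~4.5 the second condition carries the additional clause that $L$ be thin; the statement as reproduced in the paper silently drops it, and your proof inherits the resulting falsehood. With the clause restored, (second)\,$\Rightarrow$\,(first) is immediate from Theorem~\ref{poset} and there is nothing left to argue; without it, the implication cannot be proved because it does not hold.
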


Bj\"{o}rner's arguments for Theorem \ref{poset} imply a stronger statement: if $\mathscr{X}$ is a pure, regular CW $d$-complex with $d$-faces $F_1, \ldots, F_n$, then $L(\mathscr{X})$ fully determines whether $(F_1, \ldots, F_n)$ is a shelling. Thus, when discussing shellings of a CW complex $\mathscr{X}$, we need never worry about the precise attaching maps used to construct $|\mathscr{X}|$.

\section{Main results}
This section is dedicated to our main results on face numbers. We will begin with a topological lemma, used to relate shellings of a CW $d$-complex with shellings of an individual $d$-face's boundary (Lemma \ref{interior}). Next, we will prove a $d$-sphere analogue of (\ref{eqn_angles}) by induction on $d$ (Lemmas \ref{two_faces}, \ref{face_counting}). With these pieces in place, we will prove our lower bound on face numbers for shellable, strongly regular CW spheres and CW balls (Theorem \ref{lower_bound}). Finally, we will discuss conditions under which a CW $d$-sphere $\mathscr{S}$ must satisfy B\'{a}r\'{a}ny's property: $f_k(\mathscr{S}) \geq \min\{f_0(\mathscr{S}),f_d(\mathscr{S})\}$ for all $0 \leq k \leq d$.

\begin{lemma}
\label{interior}
    Let $\mathscr{X}$ be a strongly regular CW ball of dimension $d>0$ with a shelling $(F_1, \ldots, F_n)$. Let $\mathscr{C} = \partial F_n \cap \bigcup_{i=1}^{n-1} \partial F_i$. Then $\mathscr{C}$ is a CW $(d-1)$-ball and $\operatorname{int}\mathscr{C} \subseteq \operatorname{int}\mathscr{X}$.
\end{lemma}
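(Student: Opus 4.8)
The plan is to realize $\mathscr{X}$ as the union of the two CW $d$-balls $\mathscr{Y} := \langle F_1,\dots,F_{n-1}\rangle$ and $\langle F_n\rangle$, glued along $\mathscr{C}$, and then to show that the combinatorial interior of the gluing region passes into the combinatorial interior of the union. We may assume $n\geq 2$, as otherwise $\mathscr{C}=\varnothing$. Since a face of $\mathscr{X}$ belongs to both $\mathscr{Y}$ and $\langle F_n\rangle$ exactly when it is contained in $F_n$ and in some $F_i$ with $i<n$, we have $\mathscr{X}=\mathscr{Y}\cup\langle F_n\rangle$ and $\mathscr{Y}\cap\langle F_n\rangle=\mathscr{C}$.

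First I would record the structural facts. An initial segment of a shelling is again a shelling, so $(F_1,\dots,F_{n-1})$ shells $\mathscr{Y}$; since $\mathscr{X}$ is a CW ball, Theorem \ref{intersection} gives that each $\partial F_j\cap\bigcup_{i<j}\partial F_i$ is a PL-ball. Taking $j=n$, this says $\mathscr{C}$ is a PL---hence strongly regular CW---$(d-1)$-ball; applying Theorem \ref{intersection} to $\mathscr{Y}$, whose shelling intersections are among those of $\mathscr{X}$, shows $\mathscr{Y}$ is a CW $d$-ball; and the shelling axiom for $j=n$ says $\mathscr{C}$ is pure of dimension $d-1$. Moreover each $(d-1)$-face $E$ of $\mathscr{C}$ lies in $F_n$ and, because $\mathscr{X}$ is a pseudomanifold, in exactly one other $d$-face, which belongs to $\mathscr{Y}$; hence $E\in\partial\mathscr{Y}$. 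As $\mathscr{C}$ is pure, it follows that $\mathscr{C}$ is a full-dimensional subcomplex both of $\partial F_n=\partial\langle F_n\rangle$ and of $\partial\mathscr{Y}$, each a CW $(d-1)$-sphere.

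Next I would locate $\partial\mathscr{X}$. Sorting the $(d-1)$-faces $E$ of $\mathscr{X}$ by whether $E\subseteq F_n$ and whether $E$ lies in some $F_i$ with $i<n$, and using the pseudomanifold property, one checks that a $(d-1)$-face lies in exactly one $d$-face precisely when it is a $(d-1)$-face of $\partial F_n$ not lying in $\mathscr{C}$, or a $(d-1)$-face of $\partial\mathscr{Y}$ not lying in $\mathscr{C}$. Hence $\partial\mathscr{X}\subseteq\mathscr{A}\cup\mathscr{B}$, where $\mathscr{B}$ (resp. $\mathscr{A}$) is the subcomplex of $\partial F_n$ (resp. $\partial\mathscr{Y}$) generated by its $(d-1)$-faces not in $\mathscr{C}$; as the relative interior of each generator of $\mathscr{B}$ avoids $|\mathscr{C}|$, we get $|\mathscr{B}|\subseteq\overline{|\partial F_n|\setminus|\mathscr{C}|}$, and likewise $|\mathscr{A}|\subseteq\overline{|\partial\mathscr{Y}|\setminus|\mathscr{C}|}$. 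The crux is now a topological observation: if a compact $m$-manifold-with-boundary occurs as a full-dimensional subcomplex $\mathscr{C}$ of an $m$-manifold $N$, then the manifold interior of $|\mathscr{C}|$ is open in $N$---this is invariance of domain applied to the inclusion of that open $m$-manifold into $N$. Since $\mathscr{C}$ is a PL ball, the manifold interior of $|\mathscr{C}|$ equals $|\mathscr{C}|\setminus|\partial\mathscr{C}|=\bigcup_{G\in\operatorname{int}\mathscr{C}}\operatorname{relint}|G|$. Applying the observation with $N=|\partial F_n|$ and with $N=|\partial\mathscr{Y}|$, this set is open in each ambient sphere and lies in $|\mathscr{C}|$, hence is disjoint from $\overline{|\partial F_n|\setminus|\mathscr{C}|}$ and from $\overline{|\partial\mathscr{Y}|\setminus|\mathscr{C}|}$, hence from $|\mathscr{A}|\cup|\mathscr{B}|\supseteq|\partial\mathscr{X}|$. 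Therefore, for any $G\in\operatorname{int}\mathscr{C}$ we have $\operatorname{relint}|G|\cap|\partial\mathscr{X}|=\varnothing$; since $\partial\mathscr{X}$ is a subcomplex, $G\in\partial\mathscr{X}$ would force $\operatorname{relint}|G|\subseteq|\partial\mathscr{X}|$, a contradiction, and so $G\in\operatorname{int}\mathscr{X}$.

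I expect the main difficulty to be the bookkeeping: correctly placing $\partial\mathscr{X}$ inside $\mathscr{A}\cup\mathscr{B}$, and matching the combinatorial notions of interior and boundary to the topological ones so that the appeal to invariance of domain is legitimate---this is where it matters that Theorem \ref{intersection} delivers $\mathscr{C}$ as a genuine PL ball (so that $|\partial\mathscr{C}|$ is its topological boundary) rather than merely a topological ball.
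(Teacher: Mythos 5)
Your argument is correct in outline, but it takes a genuinely different route from the paper's. The paper works pointwise: for $x \in |\mathscr{C}|\backslash|\partial\mathscr{C}|$ it applies Mayer--Vietoris to the decomposition $|\mathscr{X}|\backslash x = (A\backslash x)\cup(B\backslash x)$ with $A = \bigcup_{i<n}|F_i|$, $B=|F_n|$, $A\cap B = |\mathscr{C}|$, using that $x$ lies on the boundary of both $d$-balls $A,B$ and in the manifold interior of the $(d-1)$-ball $|\mathscr{C}|$ to compute the local homology of $|\mathscr{X}|$ at $x$ and conclude $x\notin\partial|\mathscr{X}|$. You instead avoid homology altogether: you identify $\partial\mathscr{X}$ combinatorially as the subcomplex of $\partial F_n\cup\partial\mathscr{Y}$ generated by the $(d-1)$-faces outside $\mathscr{C}$, and then use invariance of domain to see that the manifold interior of $|\mathscr{C}|$ is open in each ambient sphere and hence misses the closure of the complement of $|\mathscr{C}|$, and in particular misses $|\partial\mathscr{X}|$. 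The trade-off: the paper's computation is shorter and needs the combinatorial-equals-topological boundary identification only for $\mathscr{C}$ and $\mathscr{X}$, whereas yours requires more bookkeeping and additionally that $\partial\mathscr{Y}$ realizes the boundary sphere of the shellable ball $\mathscr{Y}=\langle F_1,\ldots,F_{n-1}\rangle$; on the other hand, your approach is more elementary (no exact sequences) and, unlike the paper, you explicitly flag the one genuinely delicate point common to both proofs, namely that for these regular CW manifolds the combinatorial boundary $|\partial\mathscr{C}|$ coincides with the topological one. (Incidentally, your local-structure bookkeeping sidesteps a small index slip in the paper's displayed homology computation, where the relevant isomorphism should read $\tilde H_{d-1}(|\mathscr{X}|\backslash x)\cong\tilde H_{d-2}(|\mathscr{C}|\backslash x)\cong\mathds{Z}$.)
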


\begin{proof}
    By Theorem \ref{intersection}, $\langle F_i \rangle_{i=1}^{n-1}$ is a CW $d$-ball and $\mathscr{C}$ is a CW $(d-1)$-ball. Let $A = \bigcup_{i=1}^{n-1}|F_i| \cong \mathds{B}^d$ and $B = |F_n| \cong \mathds{B}^d$. Then $A \cup B = |\mathscr{X}| \cong \mathds{B}^d$ and $A \cap B = |\mathscr{C}| \cong \mathds{B}^{d-1}$.
    
    Consider a point $x \in |\mathscr{C}|\backslash|\partial \mathscr{C}|$. By Mayer-Vietoris, the following is an exact sequence:
    \[
        \tilde H_d(A \backslash x) \oplus \tilde H_d(B \backslash x) \longrightarrow \tilde H_d(|\mathscr{X}|\backslash x) \longrightarrow \tilde H_{d-1}(|\mathscr{C}|\backslash x) \longrightarrow \tilde H_{d-1}(A \backslash x) \oplus \tilde H_{d-1}(B \backslash x).
    \]
    We know $x \in \partial A, \partial B$, so $A \backslash x \simeq B \backslash x \simeq \mathds{B}^d$ (where $\simeq$ denotes homotopy equivalence). Thus, our exact sequence becomes
    \[
        0 \longrightarrow \tilde H_d(|\mathscr{X}|\backslash x) \longrightarrow \tilde H_{d-1}(|\mathscr{C}|\backslash x) \longrightarrow 0.
    \]
    It follows that $\tilde H_d(|\mathscr{X}|\backslash x) \cong \tilde H_{d-1}(|\mathscr{C}|\backslash x) \cong \mathds{Z}$. Hence, $x \notin \partial|\mathscr{X}| = |\partial\mathscr{X}|$. We may conclude that $|\mathscr{C}|\backslash|\partial \mathscr{C}|$ and $|\partial \mathscr{X}|$ are disjoint.

    Let $G \in \operatorname{int}\mathscr{C}$. Then $|G| \cap (|\mathscr{C}|\backslash|\partial \mathscr{C}|) \neq \varnothing$, so $|G| \not \subseteq |\partial \mathscr{X}|$. Thus, $G \in \operatorname{int}\mathscr{X}$. This completes our proof that $\operatorname{int}\mathscr{C} \subseteq \operatorname{int}\mathscr{X}$.
\end{proof}

\begin{lemma}
\label{two_faces}
    Let $\mathscr{S}$ be a strongly regular CW sphere of dimension $d \geq 0$ with a shelling $(F_1, \ldots, F_n)$. Let $1 \leq j < n$, let $\mathscr{C} = \langle F_i \rangle_{i=1}^j$, and let $\mathscr{D} = \langle F_i \rangle_{i=j+1}^n$. Then there exist faces $C \in \operatorname{int}\mathscr{C}$ and $D \in \operatorname{int}\mathscr{D}$ such that $\dim C + \dim D \leq d$.
\end{lemma}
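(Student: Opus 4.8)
The plan is to induct on $d$. The base case $d=0$ is immediate: $\mathscr{S}$ is a pair of vertices, $j=1$ is forced, and $C=F_1$, $D=F_2$ work. So assume $d\geq 1$ and that the lemma holds in dimension $d-1$. Before the induction I would record some structural facts that hold for any such split. Since $\mathscr{S}$ is a CW sphere, every $(d-1)$-face lies in exactly two $d$-faces, so a $(d-1)$-face belongs to $\partial\mathscr{C}$ precisely when it lies in one facet among $F_1,\dots,F_j$ and one among $F_{j+1},\dots,F_n$; taking closures gives $\partial\mathscr{C}=\partial\mathscr{D}=:\mathscr{E}$, a CW $(d-1)$-sphere, and (using connectedness of the dual graph of each link) $\mathscr{E}=\mathscr{C}\cap\mathscr{D}$. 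Hence the faces of $\mathscr{S}$ partition as $\operatorname{int}\mathscr{C}\sqcup\operatorname{int}\mathscr{D}\sqcup\mathscr{E}$. I would also note that $\operatorname{int}\mathscr{C}$ and $\operatorname{int}\mathscr{D}$ are upward closed under the face relation (a face containing an interior face is interior), so every $d$-face of $\mathscr{C}$ lies in $\operatorname{int}\mathscr{C}$ and every $d$-face of $\mathscr{D}$ lies in $\operatorname{int}\mathscr{D}$.

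Next I would dispose of the two extreme splits. If $j=1$, take $C:=F_1\in\operatorname{int}\mathscr{C}$ (a $d$-face); here $\partial\mathscr{D}=\mathscr{E}=\partial F_1$, and since $L(\mathscr{S})$ is a diamond lattice of rank $\geq 2$ with $F_1$ a coatom, Lemma \ref{atom} supplies a vertex $v\not\leq F_1$; then $v\in\mathscr{D}$, and as $v$ is not a face of $F_1$ we get $v\in\operatorname{int}\mathscr{D}$, so $D:=v$ gives $\dim C+\dim D=d$. The case $j=n-1$ is symmetric. Assume now $2\leq j\leq n-2$. Apply Lemma \ref{interior} to the shellable CW $d$-ball $\mathscr{C}=\langle F_1,\dots,F_j\rangle$ with its shelling $(F_1,\dots,F_j)$: then $\mathscr{C}_0:=\partial F_j\cap\bigcup_{i=1}^{j-1}\partial F_i$ is a CW $(d-1)$-ball with $\operatorname{int}\mathscr{C}_0\subseteq\operatorname{int}\mathscr{C}$. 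Since $\partial F_j$ is a shellable CW $(d-1)$-sphere admitting a shelling that begins with the $(d-1)$-faces of $\mathscr{C}_0$, and $\mathscr{C}_0$ is a proper nonempty subcomplex of $\partial F_j$, the inductive hypothesis applied to $\partial F_j$ with that shelling, split just after the facets of $\mathscr{C}_0$, yields $C'\in\operatorname{int}\mathscr{C}_0$ and $D'\in\operatorname{int}\mathscr{C}_1$ (where $\mathscr{C}_1$ is the closure of the remaining facets of $\partial F_j$) with $\dim C'+\dim D'\leq d-1$. Setting $C:=C'$ gives a face of $\operatorname{int}\mathscr{C}$.

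The main obstacle — the step I expect to be the real work — is producing the companion face $D\in\operatorname{int}\mathscr{D}$ with $\dim D\leq d-\dim C$. The naive hope that $D'$ already lies in $\operatorname{int}\mathscr{D}$ fails: the facets of $\mathscr{C}_1$ are exactly the $(d-1)$-faces $F_j\cap F_b$ with $b>j$, so $\mathscr{C}_1\subseteq\mathscr{C}\cap\mathscr{D}=\mathscr{E}=\partial\mathscr{D}$ and $D'$ lies only on the boundary of $\mathscr{D}$. The idea to repair this is to choose a facet $F_j\cap F_b$ of $\mathscr{C}_1$ containing $D'$ (so $b>j$) and run the inductive hypothesis instead on the shellable CW $(d-1)$-sphere $\partial F_b$, split just after the facets of $\mathscr{B}_0:=\partial F_b\cap\bigcup_{i<b}\partial F_i$ (which contains $F_j\cap F_b$): the complementary part $\mathscr{B}_1=\partial F_b\cap\bigcup_{i>b}\partial F_i$ is covered only by facets of index $>j$, and by the Mayer–Vietoris argument of Lemma \ref{interior} (applied to $\mathscr{D}=\langle F_b,\dots,F_n\rangle\cup\langle F_{j+1},\dots,F_{b-1}\rangle$, writing $\langle F_b,\dots,F_n\rangle=\langle F_b\rangle\cup\langle F_{b+1},\dots,F_n\rangle$) one checks $\operatorname{int}\mathscr{B}_1\subseteq\operatorname{int}\mathscr{D}$, so the inductive hypothesis on $\partial F_b$ delivers a face $D\in\operatorname{int}\mathscr{D}$. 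The delicate part is the dimension bookkeeping: one must choose the split points, and if necessary coordinate the two invocations of the inductive hypothesis (e.g. forcing the $\mathscr{B}_0$-face of the second invocation to be $C'$ itself, which is legitimate since $D'\in\mathscr{B}_0$), so that the slack in "$\dim C'+\dim D'\leq d-1$" exactly pays for the dimension lost in crossing from $\partial F_b$ back into $\operatorname{int}\mathscr{D}$; the small-$b=n$ case where $\mathscr{B}_1=\varnothing$ must be handled separately or absorbed into this bookkeeping.

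A possibly cleaner reformulation, which I would keep in mind as an alternative route, is lattice-theoretic: in $L(\mathscr{S})$ a face lies in $\operatorname{int}\mathscr{C}$ (resp. $\operatorname{int}\mathscr{D}$) exactly when the set of coatoms above it is a nonempty subset of $\{F_1,\dots,F_j\}$ (resp. $\{F_{j+1},\dots,F_n\}$); so the lemma asks for $G\in\operatorname{int}\mathscr{C}$ and $G'\in\operatorname{int}\mathscr{D}$ with $\operatorname{rank}G+\operatorname{rank}G'\leq d+2$, and any such pair automatically satisfies $G\vee G'=\hat{1}$. Thus one is really looking for "complementary" interior elements, which one might try to construct directly from a recursive coatom ordering of $L(\mathscr{S})$ (equivalently, the shelling), peeling off $F_1$ and recursing on the Boolean-type interval $[\hat{0},F_1]$ together with the rest — this is essentially the same induction, phrased so that the dimension accounting for $C$ and $D$ is forced to be complementary from the start.
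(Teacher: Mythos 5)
Your setup, base case, $j=1$ case, and the first half of the inductive step (producing $C=C'\in\operatorname{int}\mathscr{C}_0\subseteq\operatorname{int}\mathscr{C}$ via Lemma \ref{interior} and the inductive hypothesis applied to $\partial F_j$) match the paper's argument. You also correctly identify the crux: the companion face $D'$ produced by that invocation lies in the closure of the $(d-1)$-faces $F_j\cap F_b$ with $b>j$, hence in $\mathscr{C}\cap\mathscr{D}=\partial\mathscr{D}$, not in $\operatorname{int}\mathscr{D}$. But your proposed repair is a genuine gap, for two reasons. First, the second invocation of the inductive hypothesis on $\partial F_b$ returns \emph{some} pair $(C'',D'')$ with $\dim C''+\dim D''\leq d-1$; the statement being induced on gives you no way to ``force'' $C''$ to be a prescribed face (and $C'$ need not even be a face of $\partial F_b$), so there is no relation between $\dim D''$ and $\dim C'$, and the bound $\dim C'+\dim D''\leq d$ does not follow. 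Making this work would require a strictly stronger inductive statement that you have not formulated. Second, even granting that, the cases $b=n$ and the verification that $\operatorname{int}\mathscr{B}_1\subseteq\operatorname{int}\mathscr{D}$ are left open.

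The missing idea is much simpler and uses no further recursion: exploit the thinness of $L(\mathscr{S})$. Since $D'\in\operatorname{int}\mathscr{D}'$, the face $D'$ lies in no $F_i$ with $i<j$, so $F_j$ is the \emph{only} $d$-face of $\mathscr{C}$ containing $D'$, and hence every face of $\mathscr{C}$ containing $D'$ lies in $\langle F_j\rangle$. The upper interval $[D',\hat 1]$ in $L(\mathscr{S})$ is a diamond lattice, so by Lemma \ref{atom} (applied with coatom $F_j$) there is a face $D\supset D'$ with $\dim D=\dim D'+1$ and $D\notin\langle F_j\rangle$. Then $D\notin\mathscr{C}$, i.e.\ $D\in\operatorname{int}\mathscr{D}$, and the extra unit of dimension is exactly absorbed by the inductive bound $\dim C+\dim D'\leq d-1$, giving $\dim C+\dim D\leq d$. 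This one step replaces the entire second half of your argument.
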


\begin{proof}
    We will use induction on $d$. For our base case, suppose $d=0$. Then $n=2$ and $j=1$, so $\mathscr{C} = \{C\}, \mathscr{D} = \{D\}$ for the two vertices $C, D$ of $\mathscr{S}$. We can see that $\dim C + \dim D = d = 0$.

    For our inductive step, fix $d \geq 1$ and suppose the statement holds for all strongly regular $(d-1)$-spheres. Let $\mathscr{S}$ be a strongly regular $d$-sphere with a shelling $(F_1,\ldots,F_n)$. Let $1 \leq j < n$, let $\mathscr{C} = \langle F_i \rangle_{i=1}^j$, and let $\mathscr{D} = \langle F_i \rangle_{i=j+1}^n$. Since $\mathscr{C},\mathscr{D}$ are pure, full-dimensional subcomplexes of a strongly regular CW sphere, $\mathscr{C},\mathscr{D}$ must be pseudomanifolds, so $\partial \mathscr{C},\partial \mathscr{D}$ and $\operatorname{int} \mathscr{C}, \operatorname{int} \mathscr{D}$ are well-defined. Furthermore, $\operatorname{int}\mathscr{C} = \mathscr{S}\backslash\mathscr{D}$ and $\operatorname{int}\mathscr{D} = \mathscr{S}\backslash\mathscr{C}$.

    If $j=1$, then $\operatorname{int}\mathscr{D}$ must include at least one vertex by Lemma \ref{atom}. Let $C = F_1$ and let $D$ be a vertex in $\operatorname{int}\mathscr{D}$. Then $\dim C + \dim D = d$, as desired.

    Now, suppose $j>1$. Let $Q$ be the set of $(d-1)$-faces of $\partial F_j$, and define subcomplexes $\mathscr{C}',\mathscr{D}'$ of $\partial F_j$ as follows:
    \begin{align*}
        \mathscr{C}' &= \langle R \in Q \mid R \in \partial F_1, \ldots, \text{or } \partial F_{j-1}\rangle,\\
        \mathscr{D}' &= \langle R \in Q \mid R \in \partial F_{j+1}, \ldots, \text{or } \partial F_n\rangle.
    \end{align*}
    Since $F_1, \ldots, F_n$ is a shelling of $\mathscr{S}$, there exists a shelling of $\partial F_j$ beginning with the $(d-1)$-faces of $\mathscr{C}'$. Thus, by our inductive hypothesis, there exist faces $C \in \operatorname{int}\mathscr{C}'$ and $D' \in \operatorname{int}\mathscr{D}'$ such that $\dim C + \dim D' \leq d-1$.
    
    By Theorem \ref{intersection}, $\mathscr{C}$ is a CW $d$-ball and $\partial F_j \cap \bigcup_{i-1}^{j-1} \partial F_i \supseteq \mathscr{C}'$ is a CW $(d-1)$-ball. Thus, $\mathscr{C}' = \partial F_j \cap \bigcup_{i-1}^{j-1} \partial F_i$. It follows by Lemma \ref{interior} that $C \in \operatorname{int}\mathscr{C}$.
    
    We can see that $\mathscr{C}' \cap \mathscr{D}' = \partial \mathscr{C}' = \partial \mathscr{D}'$, so $D' \notin \mathscr{C}'$; thus, $F_j$ is the only $d$-face of $\mathscr{C}$ containing $D'$. Consider the upper interval $[D',\hat 1]$ in $L(\mathscr{S})$. Since $L(\mathscr{S})$ is a diamond lattice (Lemma \ref{diamond}), $[D',\hat 1]$ must be a diamond lattice as well. Thus, by Lemma \ref{atom}, there exists an atom $D$ of $[D',\hat 1]$ (i.e. a face $D$ containing $D'$ with $\dim D = \dim D' + 1$) such that $D \notin \langle F_j \rangle$. This implies that $D \notin \mathscr{C}$, so $D \in \operatorname{int} \mathscr{D}$.

    Since $\dim C + \dim D' \leq d-1$, we know $\dim C + \dim D \leq d$. This completes our inductive step.    
\end{proof}

\begin{lemma}
\label{face_counting}
    Let $\mathscr{S}$ be a strongly regular CW sphere of dimension $d-1 \geq 0$ with a shelling $(F_1,\ldots,F_n)$. Let $0 \leq j \leq n$, let $\mathscr{C} = \langle F_i \rangle_{i=1}^j$, and let $\mathscr{D} = \langle F_i \rangle_{i=j+1}^n$. Then for all $\lfloor \frac{d-1}{2} \rfloor \leq k \leq d-1$,
    \[
        f_k(\operatorname{int}\mathscr{C}) + f_k(\operatorname{int}\mathscr{D}) \geq {\lceil \frac{d+1}{2} \rceil \choose d-k} + {\lfloor \frac{d+1}{2} \rfloor \choose d-k}.
    \]
\end{lemma}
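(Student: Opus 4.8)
The plan is to argue by induction on $d$, paralleling the structure of Lemma \ref{two_faces}, but now tracking cardinalities rather than the existence of a single pair of faces. For the base case $d-1=0$ (so $d=1$), the sphere $\mathscr{S}$ is two points, $n=2$, and the only nontrivial split is $j=1$; then $\operatorname{int}\mathscr{C}$ and $\operatorname{int}\mathscr{D}$ each contain one vertex, and for $k=0$ the right-hand side is $\binom{1}{1}+\binom{0}{1}=1$; one checks the inequality directly (and the degenerate splits $j=0,n$ are handled separately since then one of the complexes is empty — actually when $j=0$ or $j=n$ one side is the whole sphere and the inequality is easy from Lemma \ref{boolean}). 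For the inductive step, fix $d\ge 2$ and assume the statement for strongly regular CW $(d-2)$-spheres. Given a shelling of the $(d-1)$-sphere $\mathscr{S}$ and a split at $j$, the faces $C\in\operatorname{int}\mathscr{C}$ we want to count come in two flavors: those of dimension $k$ that already lie in $\operatorname{int}\mathscr{C}'\subseteq\partial F_j$ (where $\mathscr{C}',\mathscr{D}'$ are the induced subcomplexes of $\partial F_j$ as in Lemma \ref{two_faces}), and the single top face $F_j$ itself when $k=d-1$. A symmetric statement holds for $\mathscr{D}$, but there the $k$-faces of $\operatorname{int}\mathscr{D}$ arise by taking $k$-faces (for $k\le d-2$) of $\operatorname{int}\mathscr{D}'$ and pushing them up one dimension via the diamond-lattice/atom argument (Lemma \ref{atom}), so they contribute to $f_{k}(\operatorname{int}\mathscr{D})$ — care is needed to see this map is injective or at least that distinct faces of $\operatorname{int}\mathscr{D}'$ yield enough distinct faces in $\operatorname{int}\mathscr{D}$.

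Concretely, I would first dispose of the degenerate cases $j=0$ and $j=n$ using Lemma \ref{boolean} applied to $[\hat 0,\hat 1]$ in $L(\mathscr{S})$, which gives $f_k(\mathscr{S})\ge\binom{d}{d-k}$, and then check that $\binom{d}{d-k}\ge\binom{\lceil(d+1)/2\rceil}{d-k}+\binom{\lfloor(d+1)/2\rfloor}{d-k}$ in the relevant range $\lfloor(d-1)/2\rfloor\le k\le d-1$ — this is a routine binomial comparison. Next, for $1\le j\le n-1$, I would set up $\mathscr{C}',\mathscr{D}'$ exactly as in the proof of Lemma \ref{two_faces}: these are complementary pieces of a shelling of the $(d-2)$-sphere $\partial F_j$, meeting along $\partial\mathscr{C}'=\partial\mathscr{D}'$. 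The inductive hypothesis applied to $\partial F_j$ with this split, for the shifted index $k-1$ when $k-1\ge\lfloor(d-2)/2\rfloor$ — equivalently $k\ge\lfloor d/2\rfloor$, which I'd need to reconcile with the stated range, likely splitting into the subcase $k\ge\lfloor d/2\rfloor$ (use induction at level $k-1$) and the edge subcase $k=\lfloor(d-1)/2\rfloor$ when that is strictly smaller — gives $f_{k-1}(\operatorname{int}\mathscr{C}')+f_{k-1}(\operatorname{int}\mathscr{D}')\ge\binom{\lceil d/2\rceil}{d-k}+\binom{\lfloor d/2\rfloor}{d-k}$.

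Then I would convert this into the desired bound on $\mathscr{S}$. For $\operatorname{int}\mathscr{C}$: by Lemma \ref{interior} (via $\mathscr{C}'=\partial F_j\cap\bigcup_{i<j}\partial F_i$) every face of $\operatorname{int}\mathscr{C}'$ lies in $\operatorname{int}\mathscr{C}$, so $f_k(\operatorname{int}\mathscr{C})\ge f_k(\operatorname{int}\mathscr{C}')$; but I actually want to relate $f_k(\operatorname{int}\mathscr{C})$ to $f_{k-1}(\operatorname{int}\mathscr{C}')$, so here instead I use that each $(k-1)$-face $C'\in\operatorname{int}\mathscr{C}'$ has, inside the diamond lattice $L(\mathscr{S})$, its upper interval $[C',\hat 1]$ a diamond lattice, and count $k$-faces above $C'$ that avoid $\mathscr{D}$ — the point being that a $(k-1)$-face interior to $\mathscr{C}'$ is covered only by $k$-faces of $\langle F_j\rangle$ together with possibly others, and I must extract ones lying in $\operatorname{int}\mathscr{C}$. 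The cleanest route is probably: count $f_k(\operatorname{int}\mathscr{C})$ by noting $\operatorname{int}\mathscr{C}$ contains $\operatorname{int}\mathscr{C}'$ (all dimensions) \emph{and} contains $F_j$, and separately run the upward-pushing (atom) argument on the $\mathscr{D}$ side to get $f_k(\operatorname{int}\mathscr{D})\ge f_{k-1}(\operatorname{int}\mathscr{D}')$ when $k\le d-1$, with injectivity of the push-up guaranteed because distinct $(k-1)$-faces of $\operatorname{int}\mathscr{D}'$ have disjoint stars in the needed sense. Summing, $f_k(\operatorname{int}\mathscr{C})+f_k(\operatorname{int}\mathscr{D})\ge f_k(\operatorname{int}\mathscr{C}')+f_{k-1}(\operatorname{int}\mathscr{D}')$ or a similar combination, and I'd massage the binomial identity $\binom{\lceil d/2\rceil}{d-k}+\binom{\lfloor d/2\rfloor}{d-k}$ together with Pascal's rule into $\binom{\lceil(d+1)/2\rceil}{d-k}+\binom{\lfloor(d+1)/2\rfloor}{d-k}$, using that $\lceil(d+1)/2\rceil=\lfloor d/2\rfloor+1$ or $\lceil d/2\rceil$ depending on parity.

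\textbf{Main obstacle.} The delicate part is the bookkeeping that turns the $(d-2)$-sphere bound at dimension $k-1$ into the $(d-1)$-sphere bound at dimension $k$: I must choose which side ($\mathscr{C}$ or $\mathscr{D}$) inherits the $\operatorname{int}\mathscr{C}'$ faces directly and which inherits $\operatorname{int}\mathscr{D}'$ faces pushed up one dimension, verify injectivity of the push-up map through the atom lemma, avoid double-counting $F_j$, and — most annoyingly — handle the mismatch in the congruence range, since the induction at level $k-1$ requires $k-1\ge\lfloor(d-2)/2\rfloor$ whereas the lemma asserts the bound already at $k=\lfloor(d-1)/2\rfloor$; the boundary value of $k$ will need a separate small argument, probably again leaning on Lemma \ref{boolean} applied to an appropriate upper interval. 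Getting the parity cases in the binomial coefficients to line up with Pascal's identity is mechanical but must be done carefully.
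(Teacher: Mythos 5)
There is a genuine gap: you have chosen to run the induction directly on the counting statement, and the inductive step as you describe it does not close. The combination you end up with, $f_k(\operatorname{int}\mathscr{C}') + f_{k-1}(\operatorname{int}\mathscr{D}')$, mixes two different dimensions, whereas the inductive hypothesis on $\partial F_j$ only bounds the same-index sum $f_{k-1}(\operatorname{int}\mathscr{C}') + f_{k-1}(\operatorname{int}\mathscr{D}')$; the injectivity of the push-up map from $(k-1)$-faces of $\operatorname{int}\mathscr{D}'$ to $k$-faces of $\operatorname{int}\mathscr{D}$ is asserted, not proved (Lemma \ref{atom} produces one cover per face, with no control over collisions); and the range mismatch at $k=\lfloor\frac{d-1}{2}\rfloor$ for even $d$ is flagged but never resolved. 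You have correctly identified these as the obstacles, but they are exactly where the proof has to happen, so the proposal is a plan rather than a proof. There is also a concrete error in your degenerate case: Lemma \ref{boolean} applied to $L(\mathscr{S})$ (rank $d+1$, $G=\hat 0$) gives $f_k(\mathscr{S})\geq\binom{d+1}{d-k}$, not $\binom{d}{d-k}$, and your weaker bound actually fails the comparison at $d=1$, $k=0$, where $\binom{1}{1}=1$ but the right-hand side is $2$.

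The paper avoids all of this bookkeeping by doing the induction only once, in Lemma \ref{two_faces}, whose conclusion is purely existential: there are faces $C\in\operatorname{int}\mathscr{C}$ and $D\in\operatorname{int}\mathscr{D}$ with $\dim C+\dim D\leq d-1$. The counting then happens entirely inside the diamond lattice $L(\mathscr{S})$: since $\operatorname{int}\mathscr{C}=\mathscr{S}\setminus\mathscr{D}$ and $\mathscr{D}$ is closed under taking subfaces, \emph{every} $k$-face containing $C$ lies in $\operatorname{int}\mathscr{C}$, so Lemma \ref{boolean} applied to $[C,\hat 1]$ gives $f_k(\operatorname{int}\mathscr{C})\geq\binom{d-\dim C}{d-k}$, and symmetrically $f_k(\operatorname{int}\mathscr{D})\geq\binom{d-\dim D}{d-k}$. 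Since $(d-\dim C)+(d-\dim D)\geq d+1$, a binomial convexity inequality (Lemma 2.11 of \cite{hinman23}) yields
\[
\binom{d-\dim C}{d-k}+\binom{d-\dim D}{d-k}\geq\binom{\lceil\frac{d+1}{2}\rceil}{d-k}+\binom{\lfloor\frac{d+1}{2}\rfloor}{d-k},
\]
which is the desired bound. If you want to salvage your approach, the key missing idea is precisely this separation: prove existence of one low-dimensional pair by induction, then count upward in the lattice, rather than trying to propagate face counts through the inductive step.
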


\begin{proof}
    First, suppose $j=0$ or $j=n$. If $j=0$, then $\mathscr{C}=\varnothing$ and $\mathscr{D} = \mathscr{S}$; likewise, if $j=n$, then $\mathscr{C} = \mathscr{S}$ and $\mathscr{D} = \varnothing$. Thus,
    \[
        f_k(\operatorname{int}\mathscr{C}) + f_k(\operatorname{int}\mathscr{D}) = f_k(\mathscr{S}).
    \]
    By Lemma \ref{boolean}, we know
    \[
        f_k(\mathscr{S}) \geq {d+1 \choose d-k} \geq {\lceil \frac{d+1}{2} \rceil \choose d-k} + {\lfloor \frac{d+1}{2} \rfloor \choose d-k},
    \]
    so we are done.

    Suppose instead that $0 < j < n$. Then by Lemma \ref{two_faces}, there exist faces $C \in \operatorname{int} \mathscr{C}$ and $D \in \operatorname{int} \mathscr{D}$ such that $\dim C + \dim D \leq d-1$. We know $C$ has rank $\dim C + 1$ in the face poset $L(\mathscr{S})$, so by Lemma \ref{boolean}, there are at least ${d-\dim C \choose d-k}$ $k$-faces of $\mathscr{S}$ containing $C$. Thus,
    \[
        f_k(\operatorname{int}\mathscr{C}) \geq {d-\dim C \choose d-k}.
    \]
    By a similar argument,
    \[
        f_k(\operatorname{int}\mathscr{D}) \geq {d-\dim D \choose d-k}.
    \]
    We know that $(d-\dim C) + (d-\dim D) \geq d+1$. Hence, by Lemma 2.11 of \cite{hinman23},
    \[
        f_k(\operatorname{int}\mathscr{C}) + f_k(\operatorname{int}\mathscr{D}) \geq {\lceil \frac{d+1}{2} \rceil \choose d-k} + {\lfloor \frac{d+1}{2} \rfloor \choose d-k}. \qedhere
    \]
\end{proof}

\begin{theorem}
\label{lower_bound}
    Let $\mathscr{X}$ be a shellable, strongly regular CW sphere or CW ball of dimension $d \geq 1$. Then for all $\lfloor \frac{d-1}{2} \rfloor \leq k \leq d$,
    \[
        f_k(\mathscr{X}) \geq \frac{1}{2} \left[{\lceil \frac{d+1}{2} \rceil \choose d-k} + {\lfloor \frac{d+1}{2} \rfloor \choose d-k}\right]f_d(\mathscr{X}) + \frac{1}{2}f_k(\partial \mathscr{X}),
    \]
    with equality precisely when $k=d$ or when $k=d-1$ and $\mathscr{X}$ is simplicial.
\end{theorem}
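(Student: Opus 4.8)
The plan is to isolate the trivial case $k=d$ and prove the main range $\lfloor\frac{d-1}{2}\rfloor\le k\le d-1$ by induction on $m:=f_d(\mathscr X)$. For $k=d$ the right-hand side is $\tfrac12(1+1)f_d(\mathscr X)+\tfrac12 f_d(\partial\mathscr X)=f_d(\mathscr X)$ since $\partial\mathscr X$ has dimension $d-1$, so this is an equality. For the base case $m=1$, $\mathscr X=\langle F_1\rangle$ is a CW $d$-ball with $\partial\mathscr X=\partial F_1$ and $f_k(\mathscr X)=f_k(\partial F_1)$ for $k\le d-1$; writing $A:={\lceil\frac{d+1}{2}\rceil\choose d-k}+{\lfloor\frac{d+1}{2}\rfloor\choose d-k}$, the desired inequality is equivalent to $f_k(\partial F_1)\ge A$, and this follows from Lemma \ref{boolean} ($f_k(\partial F_1)\ge{d+1\choose d-k}$, since $L(\partial F_1)$ is a diamond lattice of rank $d+1$) together with the Vandermonde identity ${d+1\choose d-k}=\sum_i{\lceil\frac{d+1}{2}\rceil\choose i}{\lfloor\frac{d+1}{2}\rfloor\choose d-k-i}\ge A$, which is moreover strict once $d-k\ge 2$.

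For the inductive step, fix a shelling $(F_1,\dots,F_m)$ of $\mathscr X$, let $\mathscr X':=\langle F_1,\dots,F_{m-1}\rangle$ (a shellable CW $d$-ball, by the structure theory for shellings), and set $\mathscr C:=\partial F_m\cap\bigcup_{i<m}\partial F_i=\langle F_m\rangle\cap\mathscr X'$, which by Theorem \ref{intersection} is a PL $(d-1)$-ball when $\mathscr X$ is a ball and equals $\partial F_m$ (a $(d-1)$-sphere) when $\mathscr X$ is a sphere. Let $\mathscr D_m$ be the closure of the $(d-1)$-faces of $\partial F_m$ not in $\mathscr C$; by the definition of a shelling of $\mathscr X$ there is a shelling of $\partial F_m$ beginning with the $(d-1)$-faces of $\mathscr C$, so $(\mathscr C,\mathscr D_m)$ is a split of that shelling and Lemma \ref{face_counting} gives $f_k(\operatorname{int}\mathscr C)+f_k(\operatorname{int}\mathscr D_m)\ge A$ for $\lfloor\frac{d-1}{2}\rfloor\le k\le d-1$. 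What remains is bookkeeping. Since $\langle F_m\rangle\cap\mathscr X'=\mathscr C$ and $\langle F_m\rangle$ contributes only the new $d$-face $F_m$, inclusion–exclusion gives $f_k(\mathscr X)=f_k(\mathscr X')+f_k(\partial F_m)-f_k(\mathscr C)$ for $k\le d-1$. Tracking which faces of $\langle F_m\rangle$ become interior and which remain on the boundary after gluing — using Lemma \ref{interior} to put $\operatorname{int}\mathscr C$ into $\operatorname{int}\mathscr X$, and a short ridge count to show $\partial\mathscr X\cap\mathscr C=\partial\mathscr C$ while $\partial\mathscr X\cup\mathscr C=\partial\mathscr X'\cup\partial F_m$ — yields $f_k(\partial\mathscr X)=f_k(\partial\mathscr X')+f_k(\partial F_m)-2f_k(\mathscr C)+f_k(\partial\mathscr C)$. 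Substituting these, the inductive hypothesis for $\mathscr X'$, and the identity $f_k(\partial F_m)-f_k(\partial\mathscr C)=f_k(\operatorname{int}\mathscr C)+f_k(\operatorname{int}\mathscr D_m)$ into the target inequality, one finds
\[
f_k(\mathscr X)-\tfrac12 A m-\tfrac12 f_k(\partial\mathscr X)=\Bigl(f_k(\mathscr X')-\tfrac12 A(m-1)-\tfrac12 f_k(\partial\mathscr X')\Bigr)+\tfrac12\bigl(f_k(\operatorname{int}\mathscr C)+f_k(\operatorname{int}\mathscr D_m)-A\bigr),
\]
which is nonnegative by the inductive hypothesis and Lemma \ref{face_counting}; the induction closes.

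For the equality claim, $k=d$ is always tight. For $\lfloor\frac{d-1}{2}\rfloor\le k\le d-2$ the base case is strict (strict Vandermonde, as $d-k\ge2$) and the displayed identity only adds a nonnegative term, so strict inequality propagates to all $m$. For $k=d-1$ we have $A=d+1$ and $f_{d-1}(\operatorname{int}\mathscr C)+f_{d-1}(\operatorname{int}\mathscr D_m)=f_{d-1}(\partial F_m)$, so the surplus produced at each step of the induction is $\tfrac12\bigl(f_{d-1}(\partial F_m)-(d+1)\bigr)$, which vanishes exactly when $\partial F_m$ has exactly $d+1$ facets, i.e.\ when $F_m$ is combinatorially a $d$-simplex; since the base case is tight iff $F_1$ is a simplex, equality for $k=d-1$ holds iff every $F_j$ is a simplex, which — as $\mathscr X$ is strongly regular and pure — is equivalent to $\mathscr X$ being simplicial. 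The one external input needed here is the standard fact that a strongly regular CW $(d-1)$-sphere with exactly $d+1$ facets is the boundary complex of a $d$-simplex.

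The hardest part, I expect, is the boundary bookkeeping in the inductive step: establishing the formula for $f_k(\partial\mathscr X)$, and in particular the identity $\partial\mathscr X\cap\mathscr C=\partial\mathscr C$, while simultaneously carrying the degenerate sphere case (where $\mathscr C=\partial F_m$ is a sphere, $\partial\mathscr C=\varnothing$, and $\mathscr D_m$ is empty, so that $\langle F_m\rangle$ adds no new faces of dimension $\le d-1$) on the same footing as the ball case. The inequality itself, by contrast, is almost formal once Lemmas \ref{interior}, \ref{two_faces}, and \ref{face_counting} are in hand.
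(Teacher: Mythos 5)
Your proposal is correct, and it is built from the same core ingredients as the paper's proof: the split of each facet boundary $\partial F_j$ into the subcomplex $\mathscr{C}_j$ covered by earlier facets and its complement $\mathscr{D}_j$, Lemma \ref{face_counting} applied to that split, and the Vandermonde-plus-Xue argument for the equality case. The difference is one of organization. You induct on $f_d(\mathscr{X})$ by peeling off the last facet of the shelling, which forces you to carry the explicit boundary recursion $f_k(\partial\mathscr{X}) = f_k(\partial\mathscr{X}') + f_k(\partial F_m) - 2f_k(\mathscr{C}) + f_k(\partial\mathscr{C})$ --- precisely the bookkeeping you flag as the hardest part. (Your two set identities $\partial\mathscr{X}\cap\mathscr{C}=\partial\mathscr{C}$ and $\partial\mathscr{X}\cup\mathscr{C}=\partial\mathscr{X}'\cup\partial F_m$ are true, and do follow from Lemma \ref{interior} together with the pseudomanifold ridge count, so there is no gap; it is just more delicate than it needs to be.) The paper avoids this entirely by making a single global count: it defines $\mathscr{D}_j$ to include the ridges of $\partial F_j$ lying in $\partial\mathscr{X}$, shows that each face of $\mathscr{X}$ lies in at most one of $\operatorname{int}\mathscr{C}_1,\ldots,\operatorname{int}\mathscr{C}_n,\partial\mathscr{X}$ and in at most one of $\operatorname{int}\mathscr{D}_1,\ldots,\operatorname{int}\mathscr{D}_n$, deduces $\sum_{j}[f_k(\operatorname{int}\mathscr{C}_j)+f_k(\operatorname{int}\mathscr{D}_j)]\le 2f_k(\mathscr{X})-f_k(\partial\mathscr{X})$, and bounds each summand below by Lemma \ref{face_counting}. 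Unrolling your induction reproduces that sum term by term, so the two arguments are equivalent in content; the paper's disjointness phrasing just buys a cleaner treatment of the boundary and handles spheres and balls on exactly the same footing. Your equality analysis matches the paper's: both reduce to $f_k(\partial F_j)$ achieving the Vandermonde lower bound, which forces $k=d-1$, and then $f_{d-1}(\partial F_j)=d+1$ forces each $F_j$ to be a simplex (the ``standard fact'' you invoke is \cite[Proposition 3.3]{xue23}, which the paper cites for exactly this step).
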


\begin{proof}
    First, observe that if $k=d$, then
    \[
        f_k(\mathscr{X}) = \frac{1}{2} \left[{\lceil \frac{d+1}{2} \rceil \choose d-k} + {\lfloor \frac{d+1}{2} \rfloor \choose d-k}\right]f_d(\mathscr{X}) + \frac{1}{2}f_k(\partial \mathscr{X}).
    \]
    For the remainder of this proof, we will assume that $k < d$.

    Let $n = f_d(\mathscr{X})$, and let $(F_1,\ldots,F_n)$ be a shelling of $\mathscr{X}$. Consider an arbitrary $1 \leq j \leq n$, and let $Q$ be the set of $(d-1)$-faces of $\partial F_j$. Define pure subcomplexes $\mathscr{C}_j, \mathscr{D}_j$ of $\partial F_j$ as follows:
    \begin{align*}
        \mathscr{C}_j &= \langle R \in Q \mid R \in \partial F_1, \ldots, \text{or } \partial F_{j-1} \rangle,\\
        \mathscr{D}_j &= \langle R \in Q \mid R \in \partial F_{j+1}, \ldots, \partial F_n, \text{or } \partial \mathscr{X} \rangle.
    \end{align*}
    We can see that $\mathscr{C}_j \cup \mathscr{D}_j = \partial F_j$. Furthermore, since $\mathscr{X}$ is a pseudomanifold, the sets of $(d-1)$-faces of $\mathscr{C}_j$ and $\mathscr{D}_j$ are disjoint. In other words, $\mathscr{C}_j \cap \mathscr{D}_j = \partial \mathscr{C}_j = \partial \mathscr{D}_j$.
    
    If $j=1$, then $\mathscr{C}_j = \partial F_j \cap \bigcup_{i=1}^{j-1} \partial F_{j-1} = \varnothing.$ If $j>1$, then by Theorem \ref{intersection}, $\partial F_j \cap \bigcup_{i=1}^{j-1} \partial F_{j-1}$ is a PL-ball or PL-sphere of dimension $d-1$. Since $\mathscr{C}_j$ contains all of the $(d-1)$-faces of $\partial F_j \cap \bigcup_{i=1}^{j-1} \partial F_{j-1}$, it follows that
    \[
        \mathscr{C}_j = \partial F_j \cap \bigcup_{i=1}^{j-1} \partial F_{j-1}
    \]
    for all $1 \leq j \leq n$.
    
    We now turn our attention to $\mathscr{D}_j$. By definition, if $j=1$, then $\mathscr{D}_j = \partial F_j \cap (\partial \mathscr{X} \cup \bigcup_{i=j+1}^n \partial F_i) = \partial F_j$. Likewise, if $j=n$, then $\mathscr{D}_j = \partial F_j \cap (\partial \mathscr{X} \cup \bigcup_{i=j+1}^n \partial F_i) = \partial F_j \cap \partial \mathscr{X}$. We claim that for all $1 \leq j \leq n$, $\mathscr{D}_j = \partial F_j \cap (\partial \mathscr{X} \cup \bigcup_{i=j+1}^n \partial F_i)$.

    Let $1 < j < n$. By definition, $\mathscr{D}_j \subseteq \partial F_j \cap (\partial \mathscr{X} \cup \bigcup_{i=j+1}^n \partial F_i)$. Furthermore, by Lemma \ref{interior}, $\partial F_j \backslash \mathscr{D}_j = \operatorname{int} \mathscr{C}_j \subseteq \operatorname{int} \langle F_i \rangle_{i=1}^j$. It follows that $\partial F_j\backslash \mathscr{D}_j$ is disjoint from $\partial F_{j+1}, \ldots, \partial F_n, \partial \mathscr{X}$, so $\partial F_j \backslash \mathscr{D}_j \subseteq \partial F_j \backslash (\partial \mathscr{X} \cup \bigcup_{i=j+1}^n \partial F_i)$. Thus,
    \[
        \mathscr{D}_j = \partial F_j \cap \left(\partial \mathscr{X} \cup \bigcup_{i=j+1}^n \partial F_i\right).
    \]
    
    We are ready to prove our main inequality. Let $G \in \mathscr{X}$. If $G \in \operatorname{int}\mathscr{C}_j$ for any $1 \leq j \leq n$, then $G \notin \mathscr{D}_j$, so $G \notin \partial F_{j+1}, \ldots, \partial F_n, \partial \mathscr{X}$. Likewise, if $G \in \operatorname{int}\mathscr{D}_j$ for any $1 \leq j \leq n$, then $G \notin \mathscr{C}_j$, so $G \notin \partial F_1, \ldots, \partial F_{j-1}$. That is, $G$ belongs to at most one of $\operatorname{int}\mathscr{C}_1, \ldots, \operatorname{int}\mathscr{C}_n, \partial\mathscr{X}$ and at most one of $\operatorname{int}\mathscr{D}_1, \ldots, \operatorname{int}\mathscr{D}_n$. Thus, for all $0 \leq k < d$,
    \[
        \sum_{j=1}^n [ f_k(\operatorname{int}\mathscr{C}_j) + f_k(\operatorname{int}\mathscr{D}_j) ] \leq 2f_k(\mathscr{X}) - f_k(\partial \mathscr{X}).
    \]
    Meanwhile, if $\lfloor \frac{d-1}{2} \rfloor \leq k < d$, then by Lemma \ref{face_counting},
    \[
        \sum_{j=1}^n [ f_k(\operatorname{int}\mathscr{C}_j) + f_k(\operatorname{int}\mathscr{D}_j) ] \geq \left[ {\lceil \frac{d+1}{2} \rceil \choose d-k} + {\lfloor \frac{d+1}{2} \rfloor \choose d-k} \right] n.
    \]
    We may conclude that for all $\lfloor \frac{d-1}{2} \rfloor \leq k \leq d$,
    \begin{gather}
        f_k(\mathscr{X}) \geq \frac{1}{2} \left[{\lceil \frac{d+1}{2} \rceil \choose d-k} + {\lfloor \frac{d+1}{2} \rfloor \choose d-k}\right]n + \frac{1}{2}f_k(\partial \mathscr{X}). \label{eqn_lowerbound}
    \end{gather}

    Finally, we will prove that equality holds in (\ref{eqn_lowerbound}) if and only if $k=d-1$ and $\mathscr{X}$ is simplicial. Suppose $k=d-1$ and $\mathscr{X}$ is simplicial. Then
    \[
        \frac{1}{2} \left[{\lceil \frac{d+1}{2} \rceil \choose d-k} + {\lfloor \frac{d+1}{2} \rfloor \choose d-k}\right]n + \frac{1}{2}f_k(\partial \mathscr{X}) = \left(\frac{d+1}{2}\right)n + \frac{1}{2}f_{d-1}(\partial \mathscr{X}).
    \]
    Each $d$-face of $\mathscr{X}$ is a simplex with exactly $d+1$ faces of dimension $d-1$. Meanwhile, each interior $(d-1)$-face of $\mathscr{X}$ is contained in two $d$-faces, and each boundary $(d-1)$-face is contained in one $d$-face. Thus, the sum $(d+1)n+f_{d-1}(\partial \mathscr{X})$ double-counts each $(d-1)$-face of $\mathscr{X}$. It follows that
    \[
        \left(\frac{d+1}{2}\right)n + \frac{1}{2}f_{d-1}(\partial \mathscr{X}) = f_{d-1}(\mathscr{X}),
    \]
    so we are done.

    Conversely, suppose equality holds in (\ref{eqn_lowerbound}). Then for all $1 \leq j \leq n$,
    \begin{gather}
        f_k(\operatorname{int} \mathscr{C}_j) + f_k(\operatorname{int} \mathscr{D}_j) = {\lceil \frac{d+1}{2} \rceil \choose d-k} + {\lfloor \frac{d+1}{2} \rfloor \choose d-k}, \label{eqn_equality}
    \end{gather}
    and in particular,
    \[
        f_k(\partial F_1) = {\lceil \frac{d+1}{2} \rceil \choose d-k} + {\lfloor \frac{d+1}{2} \rfloor \choose d-k}.
    \]
    Furthermore, by Lemma \ref{boolean} and Vandermonde's identity\footnote{a misnomer: Zhu Shijie stated the identity in 1303, some 470 years before Vandermonde. See \cite[pp. 59--60]{askey75}.},
    \[
        f_k(\partial F_1) \geq {d+1 \choose d-k} = {\lceil \frac{d+1}{2} \rceil \choose d-k} + {\lfloor \frac{d+1}{2} \rfloor \choose d-k} + \sum_{i=1}^{d-k-1} {\lceil \frac{d+1}{2} \rceil \choose i}{\lfloor \frac{d+1}{2} \rfloor \choose d-k-i}.
    \]
    It follows that
    \[
        \sum_{i=1}^{d-k-1} {\lceil \frac{d+1}{2} \rceil \choose i}{\lfloor \frac{d+1}{2} \rfloor \choose d-k-i} = 0,
    \]
    which can only occur if $k=d-1$.

    For each $1 \leq j \leq n$, equation (\ref{eqn_equality}) now reduces to
    \begin{align*}
        f_{d-1}(\operatorname{int}\mathscr{C}_j) + f_{d-1}(\operatorname{int}\mathscr{D}_j) &= d+1.\\
        \Rightarrow f_{d-1}(\partial F_j) &= d+1.
    \end{align*}
    Thus, by \cite[Proposition 3.3]{xue23}, each $F_j$ is a simplex. This completes our treatment of equality and our proof.    
\end{proof}

For any graded poset $L$ of rank $d+2$ and $0 \leq k \leq d$, let $f_k(L)$ be the number of elements of $L$ with rank $k+1$ (so for a pure, regular CW $d$-complex $\mathscr{X}$, $f_k(L(\mathscr{X})) = f_k(\mathscr{X}))$. The following are immediate consequences of Theorems \ref{lower_bound} and \ref{diamond}.

\begin{corollary}
    Let $L$ be a diamond lattice of rank $d+2$. If $L$ is CL-shellable, then for all $0 \leq k \leq \lceil \frac{d+1}{2} \rceil$,
    \[
        f_k(L) \geq \frac{1}{2}\left[{\lceil \frac{d+1}{2} \rceil \choose k} + {\lfloor \frac{d+1}{2} \rfloor \choose k}\right]f_0(L).
    \]
    If $L$ is dual CL-shellable, then for all $\lfloor \frac{d-1}{2} \rfloor \leq k \leq d$,
    \[
        f_k(L) \geq \frac{1}{2}\left[{\lceil \frac{d+1}{2} \rceil \choose d-k} + {\lfloor \frac{d+1}{2} \rfloor \choose d-k}\right]f_d(L).
    \]
\end{corollary}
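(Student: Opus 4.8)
The plan is to deduce both inequalities directly from Theorem~\ref{lower_bound}, using Theorem~\ref{diamond} to translate the lattice hypotheses into statements about shellable CW spheres, and using order duality to interchange the roles of $f_0$ and $f_d$. Assume $d \geq 1$; when $d = 0$, the diamond condition forces $L$ to consist only of $\hat 0$, two atoms, and $\hat 1$, and both asserted inequalities then hold with equality by inspection.

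Suppose first that $L$ is a dual CL-shellable diamond lattice of rank $d+2$. By Theorem~\ref{diamond}, $L \cong L(\mathscr{S})$ for some shellable CW $d$-sphere $\mathscr{S}$; since a poset isomorphism between graded posets with a least element preserves ranks, $f_k(L) = f_k(\mathscr{S})$ for all $0 \leq k \leq d$. Because every rank-$d$ element of the diamond lattice $L$ lies below $\hat 1$ in an interval of length two, hence of size four, each $(d-1)$-face of $\mathscr{S}$ is contained in exactly two $d$-faces; consequently $\partial \mathscr{S} = \varnothing$ and $f_k(\partial \mathscr{S}) = 0$ for every $k$. Feeding $f_k(\partial \mathscr{S}) = 0$ into Theorem~\ref{lower_bound} yields, for all $\lfloor \frac{d-1}{2} \rfloor \leq k \leq d$, exactly the second inequality of the corollary.

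Now suppose instead that $L$ is CL-shellable. Then the order dual $L^{\ast}$ is a dual CL-shellable diamond lattice of rank $d+2$, so the previous paragraph applies to $L^{\ast}$. Rank reversal gives $f_k(L^{\ast}) = f_{d-k}(L)$ for $0 \leq k \leq d$; in particular $f_d(L^{\ast}) = f_0(L)$. Substituting these identities into the inequality just obtained for $L^{\ast}$ and re-indexing by $m = d-k$ --- under which the range $\lfloor \frac{d-1}{2} \rfloor \leq k \leq d$ becomes $0 \leq m \leq d - \lfloor \frac{d-1}{2} \rfloor = \lceil \frac{d+1}{2} \rceil$, while $\binom{\lceil \frac{d+1}{2}\rceil}{d-k}$ and $\binom{\lfloor \frac{d+1}{2}\rfloor}{d-k}$ become $\binom{\lceil \frac{d+1}{2}\rceil}{m}$ and $\binom{\lfloor \frac{d+1}{2}\rfloor}{m}$ --- produces precisely the first inequality of the corollary.

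I expect no genuine obstacle: the substantive work is entirely contained in Theorems~\ref{lower_bound} and~\ref{diamond}. The only points that require a moment's care are that Theorem~\ref{diamond} hands us a CW \emph{sphere} (whence the boundary term in Theorem~\ref{lower_bound} vanishes) rather than a CW ball, the elementary identity $d - \lfloor \frac{d-1}{2} \rfloor = \lceil \frac{d+1}{2} \rceil$, and the degenerate case $d = 0$ noted above.
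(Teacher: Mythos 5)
Your argument is correct and is exactly the route the paper intends: the paper states this corollary as an immediate consequence of Theorems~\ref{lower_bound} and~\ref{diamond}, and you have simply filled in the details (vanishing boundary term for a sphere, passage to the order dual with the reindexing $m=d-k$, and the degenerate case $d=0$ excluded from Theorem~\ref{lower_bound}). All of these checks, including the identity $d-\lfloor\frac{d-1}{2}\rfloor=\lceil\frac{d+1}{2}\rceil$, are accurate.
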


\begin{corollary}
    Let $L$ be a diamond lattice of rank $d+2$ which is both CL-shellable and dual CL-shellable. Then for all $0 \leq k \leq d$, $f_k(L) \geq \min\{f_0(L),f_d(L)\}$.
\end{corollary}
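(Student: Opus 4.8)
The plan is to deduce this directly from the preceding corollary. Since $L$ is both CL-shellable and dual CL-shellable, that corollary furnishes two families of inequalities: the \emph{vertex bound} $f_k(L) \geq \tfrac{1}{2}\bigl[\binom{\lceil (d+1)/2\rceil}{k} + \binom{\lfloor (d+1)/2\rfloor}{k}\bigr]f_0(L)$, valid for $0 \leq k \leq \lceil \tfrac{d+1}{2}\rceil$, and the \emph{facet bound} $f_k(L) \geq \tfrac{1}{2}\bigl[\binom{\lceil (d+1)/2\rceil}{d-k} + \binom{\lfloor (d+1)/2\rfloor}{d-k}\bigr]f_d(L)$, valid for $\lfloor \tfrac{d-1}{2}\rfloor \leq k \leq d$. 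It therefore suffices, for each $k \in \{0,\dots,d\}$, to exhibit one of these two bounds whose multiplier is at least $1$ and which the corollary actually provides for that $k$.

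First I would handle $0 \leq k \leq \lfloor \tfrac{d+1}{2}\rfloor$ with the vertex bound. Here $k \leq \lfloor\tfrac{d+1}{2}\rfloor \leq \lceil\tfrac{d+1}{2}\rceil$, so both $\binom{\lceil (d+1)/2\rceil}{k}$ and $\binom{\lfloor (d+1)/2\rfloor}{k}$ are at least $1$; hence the multiplier is at least $1$ and $f_k(L) \geq f_0(L) \geq \min\{f_0(L),f_d(L)\}$. Next I would handle $\lceil \tfrac{d+1}{2}\rceil \leq k \leq d$ with the facet bound. Checking parities gives the identity $d - \lceil\tfrac{d+1}{2}\rceil = \lfloor\tfrac{d-1}{2}\rfloor$, so $d-k \leq \lfloor\tfrac{d-1}{2}\rfloor \leq \lfloor\tfrac{d+1}{2}\rfloor$, and again both $\binom{\lceil (d+1)/2\rceil}{d-k}$ and $\binom{\lfloor (d+1)/2\rfloor}{d-k}$ are at least $1$; hence $f_k(L) \geq f_d(L) \geq \min\{f_0(L),f_d(L)\}$. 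In each case the inequality invoked lies in the range guaranteed by the preceding corollary, since $0 \leq \lfloor\tfrac{d+1}{2}\rfloor$ and $\lfloor\tfrac{d-1}{2}\rfloor \leq \lceil\tfrac{d+1}{2}\rceil \leq d$ trivially.

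Finally I would observe that the two sub-ranges $\{0,\dots,\lfloor\tfrac{d+1}{2}\rfloor\}$ and $\{\lceil\tfrac{d+1}{2}\rceil,\dots,d\}$ together exhaust $\{0,\dots,d\}$, because $\lceil\tfrac{d+1}{2}\rceil - \lfloor\tfrac{d+1}{2}\rfloor \in \{0,1\}$ leaves no gap between them; combining the two cases yields $f_k(L) \geq \min\{f_0(L),f_d(L)\}$ for all $0 \leq k \leq d$. I do not expect a genuine obstacle here: the argument is pure bookkeeping on top of the preceding corollary, and the only two points that warrant a moment's care are the floor/ceiling identity $d - \lceil\tfrac{d+1}{2}\rceil = \lfloor\tfrac{d-1}{2}\rfloor$ and the no-gap claim, each of which is settled at once by treating $d$ even and $d$ odd separately.
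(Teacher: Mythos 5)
Your proposal is correct and is precisely the bookkeeping the paper leaves implicit when it calls this corollary an ``immediate consequence'' of the preceding one: apply the vertex bound for $0 \leq k \leq \lfloor\frac{d+1}{2}\rfloor$ and the facet bound for $\lceil\frac{d+1}{2}\rceil \leq k \leq d$, noting in each case that both binomial coefficients are at least $1$ so the multiplier is at least $1$, and that the two ranges cover $\{0,\dots,d\}$ with no gap. The floor/ceiling identity $d - \lceil\frac{d+1}{2}\rceil = \lfloor\frac{d-1}{2}\rfloor$ and the range checks are all verified correctly, so nothing is missing.
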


\section{Concluding remarks}
Recall our guiding question: if $\mathscr{X}$ is a shellable, strongly regular CW $d$-sphere or CW $d$-ball, and we are given $f_d(\mathscr{X})$, how small can its other face numbers be? We have narrowed down the possibilities, but we still do not have a full answer.

Theorem \ref{polytopes}'s linear bounds for polytopes are tight: for any $d$ and $k$, we can find a family of neighborly (resp. dual neighborly) $(d+1)$-polytopes $P$ with $f_k(P)/f_d(P)$ (resp. $f_k(P)/f_0(P)$) asymptotically approaching the given coefficients \cite{hinman24}. Similarly, Theorem \ref{lower_bound} gives a tight linear bound on $f_k(\mathscr{X})-\frac{1}{2}f_k(\partial \mathscr{X})$. The boundary complexes of the aforementioned $(d+1)$-polytopes are a family of shellable $d$-spheres $\mathscr{X}$ with $f_k(\mathscr{X})/f_d(\mathscr{X})$ asymptotically approaching $\frac{1}{2}[{\lceil(d+1)/2\rceil \choose d-k}+{\lfloor(d+1)/2\rfloor \choose d-k}]$. The same complexes, minus one $d$-face each, are a family of shellable $d$-balls $\mathscr{X}$ with $[f_k(\mathscr{X})-\frac{1}{2}f_k(\partial \mathscr{X})]/f_d(\mathscr{X})$ asymptotically approaching $\frac{1}{2}[{\lceil(d+1)/2\rceil \choose d-k}+{\lfloor(d+1)/2\rfloor \choose d-k}]$. In other words, for any $d$ and $k$, the inequality of Theorem \ref{lower_bound} would no longer hold if we replaced $\frac{1}{2}[{\lceil(d+1)/2\rceil \choose d-k}+{\lfloor(d+1)/2\rfloor \choose d-k}]$ with a higher number.

This does not preclude the possibility of tighter \emph{nonlinear} bounds. A notable nonlinear bound on face numbers is Kalai's Generalized Upper Bound Theorem for simplicial polytopes \cite{kalai91}:
\begin{theorem}[Generalized Upper Bound Theorem]
\label{gubt}
    Let $C(d,n)$ be the cyclic $d$-polytope on $n$ vertices, and let $P$ be a simplicial $d$-polytope with $f_{d-1}(P) \geq f_{d-1}(C(d,n))$. Then $f_k(P) \geq f_k(C(d,n))$ for $k=0,\ldots,d-1$.
\end{theorem}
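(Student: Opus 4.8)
The plan is to reduce the statement, which involves only combinatorial invariants of the boundary sphere $\partial P$, to an extremal problem about $M$-sequences via the $g$-theorem; this is in essence Kalai's strategy, with the combinatorial core phrased through algebraic shifting. First I would set up the standard apparatus: for a simplicial $d$-polytope $Q$ the $h$-vector $(h_0,\dots,h_d)$ determines the face numbers through $f_{k-1}(Q)=\sum_i\binom{d-i}{k-i}h_i$, satisfies the Dehn--Sommerville relations $h_i=h_{d-i}$, and has $g$-vector $(g_0,\dots,g_{\lfloor d/2\rfloor})$ given by $g_0=1$, $g_i=h_i-h_{i-1}$; by the $g$-theorem (Billera--Lee for sufficiency, Stanley for necessity) the $g$-vectors of simplicial $d$-polytopes are precisely the $M$-sequences. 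Folding the $f$-vector formula through Dehn--Sommerville rewrites each $f_{k-1}(Q)$ as $\sum_{i=0}^{\lfloor d/2\rfloor}c^{(k)}_i\,g_i(Q)$ with explicit nonnegative coefficients $c^{(k)}_i$; all $c^{(d)}_i$ are strictly positive, so $f_{d-1}$ is strictly increasing in each $g_i$.

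Next I would locate $C(d,n)$ among $M$-sequences. Since $C(d,n)$ is $\lfloor d/2\rfloor$-neighborly, $h_i(C(d,n))=\binom{n-d+i-1}{i}$ and therefore $g_i(C(d,n))=\binom{n-d+i-2}{i}$; this is exactly the componentwise-largest $M$-sequence with $g_1=n-d-1$, each entry being the Macaulay bound forced by its predecessor. Hence $n\mapsto f_{d-1}(C(d,n))$ is strictly increasing. Because an arbitrary $M$-sequence is dominated componentwise by the largest one with the same $g_1$, the hypothesis $f_{d-1}(P)\ge f_{d-1}(C(d,n))$ already forces $g_1(P)\ge n-d-1$, i.e.\ $f_0(P)\ge n$ --- this is the classical Upper Bound Theorem --- but a stacked polytope with many vertices has $f_{d-1}$ far exceeding $f_{d-1}(C(d,n))$ while its intermediate face numbers grow only linearly, so this consequence is much too weak and the facet hypothesis must be exploited globally.

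The heart of the matter is then the following extremal statement, which I would establish either by an exchange argument on $M$-sequences or, following Kalai, via algebraic shifting: with $N=f_{d-1}(C(d,n))$, among all $M$-sequences $g$ satisfying $\sum_i c^{(d)}_i g_i\ge N$, the sequence $g(C(d,n))$ simultaneously minimizes $\sum_i c^{(k)}_i g_i$ for every $k$. The combinatorial route takes a feasible $g$ and repeatedly lowers a high-index entry while, if necessary, raising a lower-index entry just enough to keep the facet sum $\ge N$ and to preserve the $M$-sequence property, showing that no such move can increase any $f_k$ and that the process halts at $g(C(d,n))$, where every Macaulay inequality is already tight; the reason it works is that $f_{d-1}$ weights the low-index $g_i$ more heavily than $f_k$ does for $k<d$, so concentrating mass at low indices --- as $C(d,n)$ does --- is the cheapest way to sustain a prescribed facet count. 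Kalai's route instead applies symmetric algebraic shifting (which preserves both the $f$-vector and the Dehn--Sommerville relations and turns $\partial P$ into a shifted Cohen--Macaulay complex) and shows that any shifted Cohen--Macaulay complex of this type with at least $N$ facets must contain $\Delta^s(\partial C(d,n))$ as a subcomplex, whereupon $f_k$ is monotone under inclusion. I expect this step --- in either formulation --- to be the main obstacle: the rest is binomial-coefficient bookkeeping with Dehn--Sommerville, but controlling all of $f_0,\dots,f_{d-1}$ at once from a single constraint on $f_{d-1}$ is exactly where the interaction between the Kruskal--Katona/Macaulay inequalities and the facet functional has to be pinned down.
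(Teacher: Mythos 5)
First, a point of comparison: the paper does not prove Theorem \ref{gubt}. It is quoted from Kalai \cite{kalai91}, with only the remark that the proof uses the $g$-theorem and algebraic shifting. Your first two paragraphs correctly reproduce the reduction that any such proof must begin with: by the $g$-theorem the statement is equivalent to a purely numerical assertion about $M$-sequences, the coefficients in $f_{j-1}(P)=\sum_{i=0}^{\lfloor d/2\rfloor}g_i(P)\bigl[\binom{d+1-i}{d+1-j}-\binom{i}{d+1-j}\bigr]$ are nonnegative, and $g_i(C(d,n))=\binom{n-d+i-2}{i}$ is the componentwise-largest $M$-sequence with $g_1=n-d-1$. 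The difficulty is that everything after this reduction \emph{is} the theorem, and you do not prove it: the extremal claim in your third paragraph (that $g(C(d,n))$ simultaneously minimizes every $f_k$ over feasible $M$-sequences) is asserted, two possible methods are named, and both are deferred. As written, the proposal is a plan, not a proof.

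Moreover, the heuristic you give for the exchange argument is backwards, so it could not be completed as described. Since every Macaulay inequality is tight for $g(C(d,n))$, each $g_i(C(d,n))$ is as large as its predecessor permits: the cyclic polytope concentrates mass at \emph{high} indices, not low ones, and that is exactly why it is extremal --- the ratios $\bigl[\binom{d+1-i}{d+1-j}-\binom{i}{d+1-j}\bigr]/(d+1-2i)$ are non-increasing in $i$, so high-index mass buys facets at the smallest cost in the other $f_k$. Concretely, for $d=6$ the coefficients of $g_0,\dots,g_3$ in $f_0$ are $7,1,0,0$ while those in $f_5$ are $7,5,3,1$; a facet-sum-preserving transfer that lowers $g_2$ by $\delta$ and raises $g_1$ by $\tfrac{3}{5}\delta$ therefore \emph{increases} $f_0$ by $\tfrac{3}{5}\delta$, contradicting your claim that no such move can increase any $f_k$. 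Such moves also travel \emph{away} from $g(C(d,n))$ (a typical feasible $g$ has $g_1>n-d-1$ and must have its low-index entries lowered and its high-index entries raised to reach the Macaulay-tight sequence), so the process cannot halt there. The correct skeleton would run the exchanges in the opposite direction and would still need two nontrivial inputs you have not supplied: the ratio monotonicity above, and a prefix-domination argument using the Macaulay inequalities to show that the deficit of a feasible $g$ at high indices can always be charged against its surplus at lower indices. That is the genuine content of Kalai's theorem, and it is missing.
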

Kalai proved Theorem \ref{gubt} using the $g$-theorem and the theory of algebraic shifting. He then made further conjectures at varying levels of generality. The following two remain open:
\begin{conjecture}[Kalai]
\label{gubc_polytopes}
    The Generalized Upper Bound Theorem applies to arbitrary polytopes.
\end{conjecture}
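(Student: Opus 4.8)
The plan is to transfer the solid-angle argument behind Theorem~\ref{polytopes} into a purely combinatorial setting, with a shelling playing the role of a generic orthogonal projection. A CW sphere or ball has neither solid angles nor projections, so in place of the ``upper'' and ``lower'' induced subcomplexes I would attach to each top cell $F_j$ a ``beginning'' subcomplex $\mathscr{C}_j$ and an ``ending'' subcomplex $\mathscr{D}_j$ inside $\partial F_j$, and in place of the solid-angle bound (\ref{eqn_angles}) I would use Lemma~\ref{face_counting}. First dispose of $k=d$: there $f_d(\partial\mathscr{X})=0$ and both binomial coefficients equal $1$, so the asserted inequality is an equality. Now assume $k<d$ and fix a shelling $(F_1,\ldots,F_n)$ of $\mathscr{X}$, $n=f_d(\mathscr{X})$. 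For each $j$ the shelling axiom makes $\partial F_j$ a shellable CW $(d-1)$-sphere; since $\mathscr{X}$ is a pseudomanifold, each $(d-1)$-face of $F_j$ lies in at most one further top cell, so I partition the $(d-1)$-faces of $\partial F_j$ into those shared with an $F_i$ with $i<j$ and those shared with an $F_i$ with $i>j$ (or lying on $\partial\mathscr{X}$), and let $\mathscr{C}_j$, $\mathscr{D}_j$ be their closures. Then $\mathscr{C}_j\cup\mathscr{D}_j=\partial F_j$ with disjoint facet sets, and the shelling axiom supplies a shelling of $\partial F_j$ whose initial segment is precisely the facet set of $\mathscr{C}_j$; thus $(\mathscr{C}_j,\mathscr{D}_j)$ is a beginning/ending splitting of a shelling of the $(d-1)$-sphere $\partial F_j$, which is exactly the input required by Lemmas~\ref{two_faces} and~\ref{face_counting}.

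Next I would pin down these subcomplexes inside $\mathscr{X}$. The identity $\mathscr{C}_j=\partial F_j\cap\bigcup_{i<j}\partial F_i$ is immediate from Theorem~\ref{intersection}, since that intersection is a pure $(d-1)$-complex and hence determined by its facets. For $\mathscr{D}_j=\partial F_j\cap\bigl(\partial\mathscr{X}\cup\bigcup_{i>j}\partial F_i\bigr)$ I would apply Lemma~\ref{interior} to $\langle F_1,\ldots,F_j\rangle$: it gives $\partial F_j\setminus\mathscr{D}_j=\operatorname{int}\mathscr{C}_j\subseteq\operatorname{int}\langle F_1,\ldots,F_j\rangle$, so the faces deleted from $\partial F_j$ to form $\mathscr{D}_j$ meet no later $\partial F_i$ and no boundary face. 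This yields the double count: if $G\in\operatorname{int}\mathscr{C}_j$ then $G\notin\mathscr{D}_j$, hence $G$ avoids $\partial\mathscr{X}$ and every $\partial F_i$ with $i>j$, and dually for $G\in\operatorname{int}\mathscr{D}_j$; so every face of $\mathscr{X}$ lies in at most one of $\operatorname{int}\mathscr{C}_1,\ldots,\operatorname{int}\mathscr{C}_n,\partial\mathscr{X}$ and in at most one of $\operatorname{int}\mathscr{D}_1,\ldots,\operatorname{int}\mathscr{D}_n$, which gives $\sum_{j=1}^n[f_k(\operatorname{int}\mathscr{C}_j)+f_k(\operatorname{int}\mathscr{D}_j)]\le 2f_k(\mathscr{X})-f_k(\partial\mathscr{X})$.

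To finish, for $\lfloor\frac{d-1}{2}\rfloor\le k\le d-1$ I would apply Lemma~\ref{face_counting} to each $\partial F_j$ with the split $(\mathscr{C}_j,\mathscr{D}_j)$, obtaining $f_k(\operatorname{int}\mathscr{C}_j)+f_k(\operatorname{int}\mathscr{D}_j)\ge{\lceil\frac{d+1}{2}\rceil \choose d-k}+{\lfloor\frac{d+1}{2}\rfloor \choose d-k}$, then sum over $j$ and divide by $2$. For the equality clause, if $k=d-1$ and $\mathscr{X}$ is simplicial then equality follows from an incidence count: each $d$-simplex has $d+1$ facets, each interior facet lies in two top cells and each boundary facet in one, and the two binomial terms sum to $d+1$. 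Conversely, equality for some $k<d$ forces Lemma~\ref{face_counting} to be tight for every $j$; taking $j=1$ (where $\mathscr{C}_1=\varnothing$ and $\mathscr{D}_1=\partial F_1$) gives $f_k(\partial F_1)={\lceil\frac{d+1}{2}\rceil \choose d-k}+{\lfloor\frac{d+1}{2}\rfloor \choose d-k}$, but Lemma~\ref{boolean} and Vandermonde's identity give $f_k(\partial F_1)\ge{d+1 \choose d-k}$, which strictly exceeds that sum unless $d-k<2$, i.e.\ $k=d-1$; and for $k=d-1$ tightness forces $f_{d-1}(\partial F_j)=d+1$ for all $j$, so by the classification of thin lattices attaining this minimum (\cite[Proposition~3.3]{xue23}) each $F_j$ is a simplex and $\mathscr{X}$ is simplicial.

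\emph{Main obstacle.} The only genuinely non-formal step is the global description of the deleted faces, namely the identity for $\mathscr{D}_j$, which guarantees that no face is discarded by two different cells; it rests on the Mayer--Vietoris computation in Lemma~\ref{interior} showing that the relative interior of a shelling-intersection complex lies in the interior of the ambient ball. Equivalently, the real content is the combinatorial core already isolated in Lemmas~\ref{two_faces} and~\ref{face_counting}: that any beginning/ending split of a shelling of a $(d-1)$-sphere leaves behind interior faces $C$, $D$ with $\dim C+\dim D\le d-1$; this is what feeds the diamond-lattice estimate (Lemma~\ref{boolean}) and the convexity inequality \cite[Lemma~2.11]{hinman23} to produce the binomial bound.
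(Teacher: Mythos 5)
You have not addressed the statement you were asked to prove. Conjecture~\ref{gubc_polytopes} is Kalai's conjecture that the Generalized Upper Bound Theorem (Theorem~\ref{gubt}) extends from simplicial polytopes to arbitrary polytopes: if $P$ is any $d$-polytope with $f_{d-1}(P)\ge f_{d-1}(C(d,n))$, then $f_k(P)\ge f_k(C(d,n))$ for all $k$. The paper does not prove this; it explicitly lists it among the conjectures that ``remain open.'' There is no proof in the paper for you to match, and no proof is currently known.

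What you have written is instead a faithful reconstruction of the proof of Theorem~\ref{lower_bound}, the paper's main result on shellable CW spheres and balls. That argument is correct as far as it goes, but it establishes only the \emph{linear} lower bound $f_k(\mathscr{X})\ge\frac{1}{2}\bigl[{\lceil\frac{d+1}{2}\rceil \choose d-k}+{\lfloor\frac{d+1}{2}\rfloor \choose d-k}\bigr]f_d(\mathscr{X})+\frac{1}{2}f_k(\partial\mathscr{X})$. The Generalized Upper Bound Theorem is a much stronger, \emph{nonlinear} statement comparing $f_k(P)$ to the face numbers of a cyclic polytope with the same number of facets; the paper itself stresses that its linear bounds ``do not preclude the possibility of tighter nonlinear bounds'' and that Kalai's proof in the simplicial case rests on the $g$-theorem and algebraic shifting, tools with no known analogue for arbitrary polytopes. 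Nothing in your shelling/double-counting argument delivers the cyclic-polytope comparison, so the gap is not a missing step but the entire content of the conjecture.
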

\begin{conjecture}[Kalai]
\label{gubc_diamonds}
    The Generalized Upper Bound Theorem applies to arbitrary Eulerian lattices.
\end{conjecture}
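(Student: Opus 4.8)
The plan is to follow the architecture of Kalai's proof of Theorem \ref{gubt}, replacing each simplicial ingredient by an Eulerian one. Kalai's argument has two halves: \emph{algebraic shifting}, which replaces an arbitrary simplicial polytope boundary by a combinatorially rigid shifted complex without changing face numbers, and the \emph{$g$-theorem}, which pins down exactly which $h$-vectors occur and thereby forces the cyclic polytope to be extremal once the number of facets is fixed. For an Eulerian lattice $L$ the linear invariant playing the role of the $h$-vector is the flag $f$-vector, most efficiently packaged as the $cd$-index $\Phi(L)$. Since every ordinary face number $f_k(L)$ is a flag number, every flag number is a non-negative integer combination of the coefficients of $\Phi(L)$, and $\Phi(L)$ has non-negative coefficients whenever $L$ is, say, the face lattice of a regular CW sphere (Karu's theorem on the $cd$-index of Gorenstein* posets), it would suffice to prove the coefficient-wise inequality
\[
    \Phi(L) \geq \Phi(C)
\]
for every Eulerian lattice $L$ with at least as many coatoms as the cyclic polytope $C$ of the same rank. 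This would reduce Conjecture \ref{gubc_diamonds} to a \emph{generalized lower bound theorem for the $cd$-index}.

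Concretely, I would proceed in three steps. First, I would make the reduction above precise, checking the (standard, but slightly delicate for non-polytopal posets) facts that $\Phi$ determines the full flag $f$-vector modulo the Bayer--Billera relations and that the coefficient of each $f_k$ in the resulting linear functional is non-negative on the cone of $cd$-indices. Second, I would look for a substitute for algebraic shifting: a construction assigning to an Eulerian lattice a ``rigid'' representative together with a hard-Lefschetz-type statement --- the natural candidate being Karu's combinatorial intersection cohomology sheaf on the order complex, whose Poincar\'e polynomial is the toric $h$-vector. Third, I would combine that rigidity with the symmetry of $\Phi$ forced by Eulerianness and a monotonicity statement for $\Phi$ under the moves relating $C$ to a general $L$ with more coatoms, mirroring the way the $g$-theorem forces cyclic-polytope extremality in the simplicial setting.

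The hard part is the second and third steps, and I expect the main obstacle to be that no analogue of algebraic shifting, and no ``$g$-theorem'' --- no $M$-sequence-type characterization of the flag $h$-vector or of the toric $g$-vector --- is known for non-simplicial Eulerian posets; for general convex polytopes this is already the still-open Conjecture \ref{gubc_polytopes}, and Karu's results supply only non-negativity, not the monotonicity under adjoining coatoms that the argument needs. A more modest first target, and one within reach of the methods of this paper, is the shellable case: given a shelling $(F_1,\dots,F_n)$ as in the proof of Theorem \ref{lower_bound}, instead of merely counting via Lemma \ref{boolean} the faces contributed by each new ball $\mathscr{C}_j=\partial F_j \cap \bigcup_{i<j}\partial F_i$, one would record the entire local contribution of $F_j$ (the faces of $\partial F_j$ not lying in $\mathscr{C}_j$) as a vector, show it dominates, coefficient by coefficient, the contribution of the corresponding step in a shelling of the cyclic polytope, and sum over $j$. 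The difficulty there is that these local contributions are governed by arbitrary intervals of $L(\mathscr{X})$ rather than by Boolean lattices, so the required step-by-step comparison is itself a miniature generalized upper bound problem.
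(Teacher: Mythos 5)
There is nothing here to compare against: the statement you are addressing is Conjecture \ref{gubc_diamonds}, which the paper explicitly lists as an \emph{open} conjecture of Kalai (alongside Conjecture \ref{gubc_polytopes}); the paper offers no proof of it, and indeed adds a further conjecture of its own strictly between the two. What you have written is a research program, not a proof, and you candidly identify its fatal gaps yourself: the entire argument funnels into either (a) a coefficient-wise monotonicity statement $\Phi(L)\geq\Phi(C)$ for the $cd$-index, or (b) an Eulerian analogue of algebraic shifting together with a $g$-theorem-type characterization --- and neither of these is known. Each is at least as strong as the conjecture it is meant to prove, so the ``reduction'' does not reduce anything. A quick sanity check confirms no such argument can be completed with current tools: Conjecture \ref{gubc_diamonds} implies Conjecture \ref{gubc_polytopes} (face lattices of polytopes are Eulerian), which is itself open even though polytopes enjoy every structural advantage (shellability, the toric $g$-theorem, Karu's nonnegativity) that your plan hopes to exploit in the far more general Eulerian setting.

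Two further cautions on the steps you do sketch. First, the proposed inequality $\Phi(L)\geq\Phi(C)$ points in a doubtful direction: the known extremal results for the $cd$-index (Billera--Ehrenborg) say the cyclic polytope \emph{maximizes} the $cd$-index among polytopes with a given number of vertices, so a coefficient-wise domination of $\Phi(C)$ by arbitrary $L$ under a facet-count hypothesis is not a safe target without evidence. Second, your ``more modest'' shellable version is essentially the strategy of Theorem \ref{lower_bound}, but upgraded from a single binomial count (Lemma \ref{boolean}) to a full step-by-step domination of the cyclic polytope's shelling contributions; as you note, each such step is governed by an arbitrary interval of $L(\mathscr{X})$ rather than a Boolean lattice, so each step is itself an instance of the unsolved problem. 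In short: the statement remains a conjecture, and the proposal, while a reasonable survey of possible lines of attack, does not constitute a proof.
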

In light of our results, we will squeeze our own conjecture in between \ref{gubc_polytopes} and \ref{gubc_diamonds}:
\begin{conjecture}
    The Generalized Upper Bound Theorem applies to arbitrary shellable, strongly regular CW spheres.
\end{conjecture}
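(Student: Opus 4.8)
The plan is to imitate Kalai's proof of Theorem \ref{gubt} for simplicial polytopes, with the shelling machinery of Section 3 playing the role of algebraic shifting and the $g$-theorem. Let $\mathscr{S}$ be a shellable, strongly regular CW $d$-sphere, fix a shelling $(F_1,\ldots,F_m)$ of its facets, and recall from the proof of Theorem \ref{lower_bound} the decomposition of each $\partial F_j$ into the ``old'' subcomplex $\mathscr{C}_j = \partial F_j \cap \bigcup_{i<j}\partial F_i$ and the faces $\operatorname{int}\mathscr{D}_j = \partial F_j \setminus \mathscr{C}_j$ appearing for the first time at step $j$ (together with $F_j$ itself). This gives the clean identity $f_k(\mathscr{S}) = \sum_{j=1}^m f_k(\operatorname{int}\mathscr{D}_j)$ for all $k<d$, where, by Theorem \ref{intersection}, $\mathscr{C}_j$ is a shellable PL-ball of dimension $d-1$ inside the CW sphere $\partial F_j$, so that each summand depends only on the combinatorial ``type'' of step $j$, namely the isomorphism type of the pair $(\partial F_j,\mathscr{C}_j)$. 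The first task is to make this precise: in the simplicial case the type is recorded by the dimension of the unique minimal new face, $f_k(\operatorname{int}\mathscr{D}_j)=\binom{d-\dim R_j}{d-k}$, and one recovers the usual $h$-vector $h_i = \#\{j:\dim R_j = i-1\}$; in general the type is a more refined invariant, and I would first work out how $C(d,n)$ decomposes under a line shelling, where the restriction faces are governed by Gale's evenness condition.

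Second, I would run an induction on $d$ via links, using the fact that for a face $G$ of $\mathscr{S}$ the upper interval $[G,\hat{1}]$ in $L(\mathscr{S})$ is again the face poset of a shellable, strongly regular CW sphere of dimension $d-\dim G-1$ (shellability of links should follow from restricting a shelling, as in the simplicial setting). The inductive hypothesis then controls the number of flags through each $G$, and comparison with the links of faces of $C(d,n)$ relates the flag $f$-vector of $\mathscr{S}$ to that of the cyclic polytope. The combinatorial heart of the argument is then to show that, among all multisets of step-types realizable by a shelling, the one realized by $C(d,n)$ minimizes $\sum_j f_k(\operatorname{int}\mathscr{D}_j)$ subject to $\sum_j f_{d-1}(\operatorname{int}\mathscr{D}_j) = f_{d-1}(\mathscr{S}) \geq f_{d-1}(C(d,n))$ --- a statement that, in the simplicial case, is exactly the Kruskal--Katona/compression estimate underlying Kalai's theorem, and which one would hope to lift to the vector-valued ``local profiles'' $\bigl(f_0(\operatorname{int}\mathscr{D}_j),\ldots,f_{d-1}(\operatorname{int}\mathscr{D}_j)\bigr)$.

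The main obstacle is the absence of a Dehn--Sommerville or $g$-theorem substitute for non-simplicial CW spheres: in Kalai's argument it is precisely the $g$-theorem that pins the $h$-vector down tightly enough to force the cyclic polytope to be extremal, and for general shellable CW spheres the relevant invariant is presumably the toric $h$-vector, for which even nonnegativity in the nonrational case is delicate. Concretely, the step requiring genuinely new input is ruling out that a non-simplicial facet can ever help --- that replacing simplicial cells of a near-cyclic sphere by larger cells cannot decrease some $f_k$ while holding $f_{d-1}$ fixed; I expect this to reduce, through the shelling decomposition above, to a monotonicity/compression inequality among the face-type profiles, but establishing such an inequality for arbitrary strongly regular cells (rather than simplices, where Kruskal--Katona applies directly) is where the real difficulty lies. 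It is plausible that the conjecture becomes tractable only once Conjecture \ref{gubc_diamonds} is understood, or, conversely, that the shellability hypothesis is exactly the extra structure that makes a direct attack along these lines succeed.
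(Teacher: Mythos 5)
This statement is posed in the paper as an open conjecture; the paper offers no proof of it, and your proposal, as you yourself acknowledge in its final paragraph, is a research programme rather than a proof. The decisive gap is the one you name but do not close: the extremality of the cyclic polytope. In Kalai's proof of Theorem \ref{gubt}, the inequality $f_k(P) \geq f_k(C(d,n))$ is forced by the $g$-theorem, which pins down the possible $h$-vectors, together with algebraic shifting; neither tool has a known analogue for non-simplicial shellable CW spheres. Your shelling decomposition $f_k(\mathscr{S}) = \sum_j f_k(\operatorname{int}\mathscr{D}_j)$ is a correct bookkeeping identity (it is essentially the one used in the proof of Theorem \ref{lower_bound}), but reducing the conjecture to a ``compression inequality among local step-type profiles'' is a reformulation, not a reduction to anything known: for non-simplicial cells the profile $\bigl(f_0(\operatorname{int}\mathscr{D}_j),\ldots,f_{d-1}(\operatorname{int}\mathscr{D}_j)\bigr)$ is not determined by a single restriction face, there is no Kruskal--Katona-type bound constraining which multisets of profiles are realizable, and no Dehn--Sommerville relations tie the early and late entries of the $f$-vector together. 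Without such a constraint, nothing in the proposal rules out a shelling whose step types beat those of $C(d,n)$ in some coordinate $k$ while matching $f_{d-1}$.

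Two smaller cautions about the machinery you invoke. First, your claim that each summand ``depends only on the isomorphism type of the pair $(\partial F_j, \mathscr{C}_j)$'' is true but unhelpfully weak: unlike the simplicial case, where the type is a single integer $\dim R_j$ and the count is a binomial coefficient, here the space of types is unbounded, so the subsequent optimization is over an infinite and poorly understood set. Second, the link induction is plausible (upper intervals of a dual CL-shellable lattice are dual CL-shellable, so by Theorem \ref{diamond} they are face posets of shellable CW spheres), but the inductive hypothesis controls flag numbers, and passing from flag-number bounds back to the ordinary $f$-vector bound of the Generalized Upper Bound Theorem is itself a nontrivial step that the proposal leaves open. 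In short, the approach is a reasonable way to \emph{think} about the conjecture, consistent with the techniques of this paper, but the statement remains unproved.
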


\section{Acknowledgements}
The author would like to thank Isabella Novik for her extensive guidance in writing and editing this paper. The author would also like to thank Hailun Zheng for her help editing. The author was  partially supported by a graduate fellowship from NSF grant DMS-2246399.

\bibliography{bibliography}
\bibliographystyle{plain}

\end{document}